\documentclass[12pt]{amsart}
\usepackage{lineno}
\usepackage{comment}

\setlength{\evensidemargin}{0.25in}
\setlength{\oddsidemargin}{0.25in}
\setlength{\textwidth}{6in}
\parskip0.2em

\usepackage{amssymb}
\usepackage{amsthm}
\usepackage{amssymb}


\newtheorem{thm}{Theorem}[section]

\newtheorem{prop}[thm]{Proposition}
\newtheorem{cor}[thm]{Corollary}
\newtheorem{lem}[thm]{Lemma}

\theoremstyle{remark}
\newtheorem*{rema}{Remark}

\theoremstyle{definition}\newtheorem{defi}[thm]{Definition}

\newcommand{\m}{(M,\omega)}
\newcommand{\mlam}{$(\widetilde M,\omega_\lambda)$}
\newcommand{\mtilde}{(\widetilde M,\widetilde\omega_\rho)}

\newcommand{\hf}{\textup{HF}}

\begin{document}


\title{Lagrangian Floer homology on  symplectic blow ups}
\author{Andr\'es Pedroza}
\address{Facultad de Ciencias\\
           Universidad de Colima\\
           Bernal D\'{\i}az del Castillo No. 340\\
           Colima, Col., Mexico 28045}
\email{andres\_pedroza@ucol.mx}

\begin{abstract}
We show how to compute the Lagrangian Floer homology in the 
one-point blow up of the proper transform of Lagrangians submanifolds,
solely in terms of information of the base manifold.  
As an example we present an alternative computation of the Lagrangian quantum homology
in the one-point blow up of $(\mathbb{C}P^2,\omega_{FS})$ of the proper transform of
the  Clifford torus.
\end{abstract}


\maketitle

\section{Introduction}
Lagrangian Floer homology (LFH for short) is a powerful tool developed by A. Floer 
\cite{floer-witten, floer-morse} to solve
the Arnol'd Conjecture about the minimal number of fixed points of a Hamiltonian 
diffeomorphism on a closed symplectic manifold.
Nevertheless, LFH is important in its own right
due to its rich algebraic structure and for its interference in the classification 
of Lagrangian submanifolds under Hamiltonian equivalence, among other things.
Unfortunately, it is extremely difficult to compute the LFH
of a  pair of Lagrangian submanifolds.

The aim of this article is to understand  how the LFH  changes
after symplectically blowing up a point that does not lie in the Lagrangian submanifolds. 
Further, we show how to compute the differential in the blown up manifold 
from the differential in the base manifold
and hence the LFH in 
the symplectic one-point blow up using {\em solely} data  from the base symplectic manifold.  
We used LFH for monotone Lagrangian submanifolds
as defined by Y.-G. Oh in \cite{oh-floercoho, oh-adde} and also the quantum
homology of a Lagrangian as defined by P. Biran and O. Cornea 
 in \cite{biran-cornea-rigidity} that was previously defined also by Y.-G. Oh in \cite{oh-relative}.
Hence, $(M,\omega)$ will stand for a symplectic manifold that is either closed
or convex at infinity and the Lagrangian submanifolds will be assumed to be monotone
 with  minimal Maslov number 
greater than or equal to two.

As for the Lagrangian submanifolds, beside the monotonicity condition we impose another 
condition which is directly related to the definition of the  symplectic one-point blow up. 
Recall that $L\subset (M,\omega)$ is called {\em monotone} if there exists a positive 
number $\lambda$ such 
that $\mu_L=\lambda\cdot \omega$ on $\textup{H}_2(M,L)$.
In order to remain in the context of monotone Lagrangians, the weight $\rho$ of the one-point blow up
must be equal to 
\begin{eqnarray}
\label{e:rho} 
\rho=\sqrt{\frac{n-1}{(\lambda/2)\cdot\pi}}.
\end{eqnarray}
Accordingly we focus on symplectic manifolds whose Gromov's width is such that
$c(M,\omega)>
2(n-1)/\lambda$.
Thus,  if 
$\iota:(B^{2n}(\rho),\omega_0)\to (M,\omega)$ is the symplectic embedding 
 used to define the symplectic one-point blow up $(\widetilde M,\widetilde\omega_\rho)$,
we restrict our analysis to monotone Lagrangian submanifolds $L\subset (M,\omega)$ such that
$L\cap \iota(B^{2n}(\rho))$ is empty. Here   $B^{2n}(\rho)\subset (\mathbb{R}^{2n},\omega_0)$ 
is the closed ball of radius $\rho$ centred at the origin.  
This means that the Lagrangian submanifold that we consider are such that 
$c(M\setminus L,\omega)>2(n-1)/\lambda$. 

This condition is quite restrictive. Here is an example that shows our limitation.
Consider $(\mathbb{C}P^n,\omega_\textup{FS})$
where the symplectic form is normalized so that the symplectic area of the line is $\pi.$
Then $\mathbb{R}P^n\subset (\mathbb{C}P^n,\omega_\textup{FS})$ 
 is a monotone Lagrangian submanifold 
with minimal Maslov number $n+1$. For $n>2$, the Lagrangian $\mathbb{R}P^n$ fails
the condition to be away from the prescribed embedded ball. 
For, according to 
P. Biran \cite[Theorem 1.B]{biran-lagrangianbarriers}, the image of any symplectic embedding
$(B^{2n}(\rho),\omega_0)\to (\mathbb{C}P^n,\omega_\textup{FS})$  with $\rho^2\geq 1/2$
intersects the Lagrangian $\mathbb{R}P^n$. On the other hand,  the value of the weight of 
the blow up, in order to obtain a monotone Lagrangian, must be equal to
 $\rho^2=(n-1)/(n+1)$. 
 Hence, for $n>2$ the monotone Lagrangian  
$\mathbb{R}P^n\subset (\mathbb{C}P^n,\omega_\textup{FS})$ 
fails the condition
to be disjoint from the embedded ball used to define the symplectic one-point blow up.

Let $\pi:(\widetilde M,\widetilde\omega_\rho)\to (M,\omega)$ denote  the blow up map. From now
on, we always assume that the weight of the blow up is such that $(\widetilde M,\widetilde\omega_\rho)$
is monotone and the Lagrangian submanifolds are such that the Gromov's width of their complement
is  greater than $2(n-1)/\lambda$.
Recall that away from the preimage of the embedded ball $\iota(B^{2n}(\rho))$, $\pi$ is a symplectic diffeomorphism.
Hence if $L\subset (M,\omega)$ is a 
Lagrangian submanifold such that  $L\cap  \iota(B^{2n}(\rho)) =\emptyset$,
then the proper transform $\pi^{-1}(L)$ is a Lagrangian submanifold in $(\widetilde M,\widetilde\omega_\rho)$.
Let $\widetilde L\subset (\widetilde M,\widetilde\omega_\rho)$
denote the Lagrangian submanifold $\pi^{-1}(L)$. If $L$ is monotone, 
it follows from  Lemma \ref{l:monotonelag}
that $\widetilde L$ is also monotone. However their minimal Maslov number do not have to 
coincide. 
In general the minimal Maslov number of $\widetilde L$
decreases in comparison with the minimal Maslov number of  $L$,
 $N_{\widetilde L}\leq N_L.$

For instance consider $L=\mathbb{R}P^2$ in $(\mathbb{C}P^2,\omega_\textup{FS})$.
Thus 
$L$ is monotone and  its minimal Maslov number is three. 
Further, there is a symplectic embedding
of $(B^4(1/\sqrt{3}),\omega_0)$ into  $(\mathbb{C}P^2,\omega_\textup{FS})$ 
whose image does not intersect $L$.
 Let $A\in \textup{H}_2(\mathbb{C}P^2,\mathbb{R}P^2)$ be such that
$\mu_L(A)=3$ and $\widetilde A\in \textup{H}_2(\widetilde{\mathbb{C}P}\,^2,\widetilde L)$ the proper
transform of $A$. By the definition of the Maslov index if follows that 
$\mu_{\widetilde L}(\widetilde A)=3$.
If $L_E\in \textup{H}_2(\widetilde{\mathbb{C}P}\,^2)$ is the class of the exceptional line,
then $c_1(\widetilde{\mathbb{C}P}\,^2)(L_E)=-1$ and
\begin{eqnarray*}
\mu_{\widetilde L}(\widetilde A\#L_E )=
\mu_{\widetilde L}(\widetilde A) +2c_1(\widetilde{\mathbb{C}P}\,^2)(L_E)=1.
\end{eqnarray*}
That is $N_{\widetilde{\mathbb{R}P}\,^2 }=1$. 
In particular
its LFH is not well-defined in this case.

\begin{defi}
Let $(M,\omega)$ be a symplectic manifold of dimension $2n\geq 4$ that is either closed
or symplectically convex at infinity. A collection of closed Lagrangian submanifolds
 $L_0,\ldots, L_k \subset (M,\omega)$
is called {\em admissible} in $(M,\omega)$ if 
\begin{itemize}
\item[a)] the Lagrangians are monotone 
 with the same monotonicity constant $\lambda$, 
\item[b)] $c(M\setminus (L_0\cup\cdots \cup L_k),\omega)>2(n-1)/\lambda$ and
\item[c)] $N_{\widetilde L_j}\geq 2$ for $j\in\{1,2\ldots ,k\}$ where the one-point
blow up of $(M,\omega)$ is taken with respect to any symplectic embedding of the ball
in $M\setminus (L_0\cup\cdots \cup L_k)$ whose radius is given by (\ref{e:rho}).
\end{itemize}
\end{defi}
 
Consider $L_0,L_1\subset (M,\omega)$  Lagrangians submanifolds  that are
admissible.
The goal of this note  is to compute $\textup{HF}(\widetilde{L}_0,\widetilde{L}_1)$
in terms of the based manifold $(M,\omega)$, the Lagrangians $L_0$ and $L_1$, and the Floer
complex $(\textup{CF}(L_0,L_1),\partial)$. (As mentioned above, the pearl complex
will also be considered.) 
The first step is to understand  the role played by the 
blown up point. The blown up point not only has to lie in the complement of
the Lagrangian submanifolds, it must be a generic point.
The reason been that  any pair of points in 
the complement of $L_0\cup L_1\subset
(M,\omega)$ 
can be mapped one to the other by a Hamiltonian diffeomorphism that  leaves the Lagrangian submanifolds fixed.

\begin{thm}
\label{t:invariance}
Let $L_0,L_1\subset (M,\omega)$  be admissible  Lagrangian submanifolds.
Assume that there exists 
$J\in \mathcal{J}_\textup{reg}(L_0,L_1)$ 
such that  $\iota^*J=J_0$ where $\iota:(B^{2n}(\rho),\omega_0)\to (M,\omega)$ is 
a symplectic embedding that avoids the
Lagrangian submanifolds and $\rho$ is given by Eq. (\ref{e:rho}).
If
 $\pi_j: (\widetilde M_j,\widetilde\omega_j)\to (M,\omega)$
are  the  monotone one-point blow up of
$(M,\omega)$ at $x_j\in M\setminus (L_0\cup L_1)$ for $j=1,2$,
then
\begin{eqnarray*}
\textup{HF}_*(\pi_1^{-1}(L_0),\pi_1^{-1}(L_1))\simeq
\textup{HF}_*(\pi_2^{-1}(L_0),\pi_2^{-1}(L_1))
\end{eqnarray*}
as $\Lambda$-modules.
\end{thm}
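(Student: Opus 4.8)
The plan is to deduce the isomorphism from the fact that the two blow ups differ only by the choice of centre, together with the rigidity of Lagrangian Floer homology under symplectomorphisms. First I would produce a Hamiltonian diffeomorphism of $\m$ carrying $x_1$ to $x_2$ and fixing $L_0$ and $L_1$, then lift it to a symplectomorphism between the two blown up manifolds, and finally transport the entire Floer package across this lift. For the first point, note that $L_0$ and $L_1$ are closed submanifolds of codimension $n\geq 2$, so $M\setminus(L_0\cup L_1)$ is connected and contains the image of the ball embedding $\iota$ of the hypothesis. Since the group of compactly supported Hamiltonian diffeomorphisms of a connected symplectic manifold acts transitively, for $j=1,2$ there is $\phi_j\in\ham(M,\omega)$ supported in $M\setminus(L_0\cup L_1)$ with $\phi_j(\iota(0))=x_j$; each $\phi_j$ is the identity on a neighbourhood of $L_0\cup L_1$, hence $\phi_j(L_i)=L_i$. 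I would take the blow up at $x_j$ with respect to $\iota_j:=\phi_j\circ\iota$, a symplectic embedding of $B^{2n}(\rho)$ centred at $x_j$ and avoiding $L_0\cup L_1$; both blow ups use the weight $\rho$ of (\ref{e:rho}), which is admissible by the Gromov width bound in the definition of admissibility. Put $\phi:=\phi_2\circ\phi_1^{-1}\in\ham(M,\omega)$, which is the identity near $L_0\cup L_1$ and satisfies $\phi\circ\iota_1=\iota_2$.

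Next I would lift $\phi$. The symplectic one-point blow up modifies $\m$ only inside the embedded ball and is functorial with respect to symplectic embeddings, so the relation $\phi\circ\iota_1=\iota_2$ yields a diffeomorphism $\widetilde\phi:(\widetilde M_1,\widetilde\omega_1)\to(\widetilde M_2,\widetilde\omega_2)$ that agrees with $\phi$ away from the exceptional divisors, in the sense that $\pi_2\circ\widetilde\phi=\phi\circ\pi_1$ there, and is the tautological identification of the two copies of the blown up ball near them. One then checks $\widetilde\phi^*\widetilde\omega_2=\widetilde\omega_1$: outside the exceptional divisors it is just $\phi^*\omega=\omega$, while near them it holds because the two forms are produced by the same construction from the same data, namely the standard ball of radius $\rho$, the two models being intertwined by $\iota_2\circ\iota_1^{-1}=\phi|_{\iota_1(B^{2n}(\rho))}$. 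Because $\phi$ is the identity near the Lagrangians, $\widetilde\phi(\pi_1^{-1}(L_i))=\pi_2^{-1}(\phi(L_i))=\pi_2^{-1}(L_i)$ for $i=0,1$, so $\widetilde\phi$ carries the pair $(\pi_1^{-1}(L_0),\pi_1^{-1}(L_1))$ onto the pair $(\pi_2^{-1}(L_0),\pi_2^{-1}(L_1))$.

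Finally I would transport the Floer data. The hypothesis supplies $J\in\mathcal{J}_\textup{reg}(L_0,L_1)$ with $\iota^*J=J_0$. Since $\phi_j$ fixes $(L_0,L_1)$, the structure $J_j:=(\phi_j)_*J$ again lies in $\mathcal{J}_\textup{reg}(L_0,L_1)$ and satisfies $\iota_j^*J_j=J_0$, so the Floer homology of the proper transforms is defined for both blow ups and $J_2=\phi_*J_1$. If $\widetilde J_1$ denotes the almost complex structure on $\widetilde M_1$ obtained by gluing $J_1$ to the standard structure on the blown up ball, then $(\widetilde\phi)_*\widetilde J_1$ is precisely the structure on $\widetilde M_2$ built the same way from $J_2$, and it is regular, since regularity is invariant under symplectomorphisms. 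Hence $\widetilde\phi$ gives a bijection $\pi_1^{-1}(L_0)\cap\pi_1^{-1}(L_1)\to\pi_2^{-1}(L_0)\cap\pi_2^{-1}(L_1)$ on generators and, by pushing forward holomorphic strips, a bijection of the zero-dimensional moduli spaces preserving Maslov index and symplectic area. Thus $\widetilde\phi$ is an isomorphism of Floer complexes over $\Lambda$ and induces the asserted $\Lambda$-module isomorphism on $\textup{HF}_*$; the same argument, with holomorphic discs and Morse trajectories in place of strips, handles the pearl complex.

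I expect the crux to be the lifting step: constructing $\widetilde\phi$ and verifying rigorously that it is a symplectomorphism of the blown up forms, and then in the last step that it identifies the two glued almost complex structures. This requires working with the explicit local model of the symplectic blow up and exploiting $\phi\circ\iota_1=\iota_2$ with care. A related minor point is to confirm that the blow up Floer homology at a fixed centre does not depend on the chosen ball embedding; this is the degenerate case $x_1=x_2$ and is handled by the same ideas. Everything after the lifting step is a formal transport of structure along a symplectomorphism.
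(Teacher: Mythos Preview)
Your proposal is correct and follows essentially the same strategy as the paper: construct a Hamiltonian diffeomorphism supported in $M\setminus(L_0\cup L_1)$ carrying one centre to the other, push the ball embedding and the regular almost complex structure along it, and conclude by transporting the Floer data. The paper's argument is terser---it does not spell out the lifted symplectomorphism $\widetilde\phi$ between the two blow ups and instead appeals directly to the correspondence of holomorphic strips and to Proposition~\ref{p:regular} for regularity on each blow up---but the underlying mechanism is the same as yours.
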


\begin{rema}
The hypothesis on the almost complex structure, $\iota^*J=J_0$,  is required in order
to  move holomorphic information from the based manifold to the blow up manifold and vice versa.
Here, $J_0$ stands for the standard complex
structure on $\mathbb{C}^n.$ 
Actually, is not hard to obtain an almost complex
structure $J$ such that satisfies  $\iota^*J=J_0$.
 The problem is to guarantee
that such almost complex structure is regular. Although, $\mathcal{J}_\textup{reg}(L_0,L_1)$ 
is a dense set in $\mathcal{J}(M,\omega)$
the set $\{J\in \mathcal{J}(M,\omega) | \iota^*J=J_0\}$ is closed and has a
dense complement. 
On the other side, most known examples of symplectic manifolds that at the same time
are  complex manifolds, the complex structure happens to be regular.
\end{rema}

Hence  $\textup{HF}_*(\widetilde{L}_0,\widetilde{L}_1)$ is independent
of the blown up point of $(M,\omega)$ in the 
complement of $L_0\cup L_1$. 
Now that the lack of 	relevance of the blown up point has been settled, we focus con
the comparison of the LFH on the based manifold and the one-point blow up.
By the observation that the blow up map is a symplectic diffeomorphism 
away from the embedded ball, heuristically
$\textup{HF}_*(L_0,L_1)$ is isomorphic to $\textup{HF}_*(\widetilde L_0,\widetilde L_1)$
if the blown up point is not implicated in the  data involved in the definition of 
$\textup{HF}_*(L_0,L_1)$.
 The precise statement is the following.

\begin{thm}
\label{t:noholo}
Let $L_0,L_1\subset (M,\omega)$  be admissible Lagrangians.
If there exists $x_0\in M\setminus (L_0\cup L_1)$ and
 $J\in\mathcal{J}_{\textup{reg}}(L_0,L_1)$
such that $\iota^*J=J_0$; where $\iota$ is the symplectic embedding of the ball,
 and $x_0$ does not lie
in any $J$-holomorphic strip of Maslov-Viterbo index $2n-1$, then
\begin{eqnarray*}
\textup{HF}_*(\widetilde L_0,\widetilde L_1)\simeq
\textup{HF}_*(L_0,L_1).
\end{eqnarray*}
as $\Lambda $-modules.
\end{thm}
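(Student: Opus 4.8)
The plan is to compare the two pearl (or Floer) complexes generator-by-generator and then show the differentials agree. Since $L_0\cap L_1$ is disjoint from $\iota(B^{2n}(\rho))$ and $\pi$ is a symplectomorphism away from the exceptional divisor, the intersection points $\widetilde L_0\cap\widetilde L_1$ are in canonical bijection with $L_0\cap L_1$; moreover, under the admissibility hypotheses both Lagrangians are monotone with minimal Maslov number at least two (using Lemma \ref{l:monotonelag} and condition (c) of admissibility), so $\textup{HF}_*(\widetilde L_0,\widetilde L_1)$ is well defined. Thus the underlying $\Lambda$-modules of the two Floer complexes are identified, and everything reduces to matching the differentials, i.e. to matching moduli spaces of $J$-holomorphic strips of Maslov--Viterbo index one downstairs with $\widetilde J$-holomorphic strips of index one upstairs, for a suitable $\widetilde J$.

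First I would fix the regular $J\in\mathcal{J}_{\textup{reg}}(L_0,L_1)$ with $\iota^*J=J_0$ provided by the hypothesis, and lift it to an almost complex structure $\widetilde J$ on $(\widetilde M,\widetilde\omega_\rho)$ that agrees with $J$ outside $\pi^{-1}(\iota(B^{2n}(\rho)))$ and is the standard blow up of $J_0$ near the exceptional divisor; this is possible precisely because $\iota^*J=J_0$. The key geometric input is a dichotomy for $\widetilde J$-holomorphic strips $\widetilde u$ with boundary on $\widetilde L_0\cup\widetilde L_1$: either $\widetilde u$ is disjoint from the exceptional divisor $\Sigma=\pi^{-1}(x_0)$, in which case $\pi\circ\widetilde u$ is an honest $J$-holomorphic strip downstairs of the same Maslov--Viterbo index; or $\widetilde u$ meets $\Sigma$, and then by positivity of intersection $[\widetilde u]\cdot[\Sigma]=m\geq 1$, so writing $[\widetilde u]=\widetilde A-mL_E$ with $\pi_*[\widetilde u]=A$ the projected (possibly nodal, after Gromov compactification) curve $\pi\circ\widetilde u$ passes through $x_0$ and has Maslov--Viterbo index $\mu_{L}(A)=\mu_{\widetilde L}(\widetilde u)+2m$.

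Now I would run the index count. A rigid strip upstairs has index one. If it avoids $\Sigma$ it projects to a rigid strip downstairs avoiding $x_0$, giving an injection of the upstairs differential into the downstairs one; conversely a rigid strip downstairs avoids the codimension-two set $\iota(B^{2n}(\rho))$ generically — here is where genericity of $x_0$ and Theorem \ref{t:invariance} let us assume $x_0$ lies on no rigid strip — so it lifts uniquely, giving the reverse inclusion. It remains to rule out rigid upstairs strips that do hit $\Sigma$: such a strip has a projection through $x_0$ of index $\mu_{\widetilde L}(\widetilde u)+2m=1+2m\le 2n-1$ once we also account for the $\Sigma$-components (whose total first Chern number contribution is controlled by $n-1$ via the monotonicity weight (\ref{e:rho})), so the projected configuration sits in a moduli space of $J$-holomorphic strips through $x_0$ of Maslov--Viterbo index at most $2n-1$. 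The hypothesis that $x_0$ lies on no $J$-holomorphic strip of index $2n-1$, together with the standard fact that through a generic point the space of strips of index $<2n-1$ is empty for dimension reasons, forces this case to be vacuous. Hence the two differentials coincide and the homologies are isomorphic as $\Lambda$-modules.

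The main obstacle I expect is the bookkeeping in the last step: controlling the energy/index of the bubble components that lie inside or hit the exceptional divisor $\Sigma$, and making the ``Maslov--Viterbo index at most $2n-1$'' bound precise after Gromov compactification (one must ensure that a strip hitting $\Sigma$ with multiplicity $m$ really does force a projected strip-with-sphere-bubbles configuration through $x_0$ of total index $\le 2n-1$, rather than index exactly $2n-1$ being attainable only for $m$ in a restricted range). This is exactly the point where the monotonicity of $(\widetilde M,\widetilde\omega_\rho)$ and the precise value (\ref{e:rho}) of the weight $\rho$ — equivalently $c_1(\widetilde M)(L_E)=-1$ and $\omega_\rho(L_E)=\pi\rho^2$ — are used to convert intersection multiplicity with $\Sigma$ into a sharp index deficit, and I would isolate it as a separate lemma before assembling the proof.
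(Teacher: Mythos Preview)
Your approach is the same as the paper's: identify the generators via the bijection $L_0\cap L_1\leftrightarrow\widetilde L_0\cap\widetilde L_1$, lift $J$ to $\widetilde J$, and compare index-one strips on the two sides. The paper's argument is a short paragraph invoking Proposition~\ref{p:regular} (regularity of $\widetilde J$) and Proposition~\ref{p:maslovindx} (the index relation), and then observing that an index-one $\widetilde J$-strip meeting the exceptional divisor projects to an index-$(2n-1)$ $J$-strip through $x_0$, which the hypothesis excludes.

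There is, however, a concrete error in your index bookkeeping. By Proposition~\ref{p:maslovindx} the index shift per intersection with $E$ is $2(n-1)$, not $2$: a rigid upstairs strip $\widetilde u$ with $[\widetilde u]\cdot E=m\ge 1$ projects to a downstairs strip of Maslov--Viterbo index $1+2(n-1)m$, which equals $2n-1$ only for $m=1$ and is strictly larger for $m\ge 2$. Your claimed bound ``$\le 2n-1$'' therefore does not follow, and the phrase ``once we also account for the $\Sigma$-components'' does not repair it --- there are no separate bubble components here, just a single strip whose projection passes through $x_0$ with multiplicity $m$. The paper's proof in fact treats only the $m=1$ case and is silent on $m\ge 2$, so the obstacle you honestly flag in your final paragraph is genuine and is not resolved in the paper either. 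A second, smaller gap: you construct $\widetilde J$ but never argue it is regular for $(\widetilde L_0,\widetilde L_1)$; that is exactly the content of Proposition~\ref{p:regular}, and without it the upstairs moduli spaces need not be cut out transversally and the strip-counting comparison has no footing.
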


\begin{rema}
\begin{itemize}
\item 
A straightforward class of examplea of the above result is the case when the
symplectic manifold is non compact. For instance $(\mathbb{C}^n,\omega_0)$
and any pair of compact Lagrangian submanifolds that meet the hypothesis of 
Theorem \ref{t:noholo}. That is, after blowing up one point in $(\mathbb{C}^n,\omega_0)$,
the induced compact Lagrangian submanifolds  in  $(\widetilde{\mathbb{C}}^n,\widetilde \omega_0)$
will continue to be displaceable.

\item 
The reason of the value of $2n-1$ of the Maslov-Viterbo index of the holomorphic strip
is the following. The image of $\mathbb{R}\times [0,1]\sqrt{-1}$ under the set of all
$J$-holomorphic strips in the same class of Maslov-Viterbo index $2n-1$,
{\em generically} is $2n$-dimensional and after blowing up it could induced
a holomorphic strip on the one-point blow up of Maslov-Viterbo index 1.
This claim is proved in Proposition \ref{p:maslovindx}.
Basically, this is the only possibility to alter the differential of the Floer complex of
the blow up in comparison with  the differential of the based manifold.
\item By Theorem \ref{t:invariance}, the LFH is independent of the blown up
point, thus some uniformity on the complement of $L_0\cup L_1$
is required in order to alter the Floer differential. 
This will be done below by assuming a uniruled condition
in
 the context of quantum homology.
\end{itemize}
\end{rema}

From Theorem \ref{t:invariance}, $\textup{HF}_*(\widetilde L_0,\widetilde L_1)$
is independent
of the point in $(M,\omega)$ that is blown up. Further, from Theorem \ref{t:noholo}
holomorphic strips of Maslov-Viterbo index $2n-1$ are the strips that
can affect the differential in the blown up manifold in comparison with the differential
in the base manifold.
Thus a necessary condition for  $\textup{HF}_*(\widetilde L_0,\widetilde L_1)$
to be non isomorphic to  $\textup{HF}_*(L_0,L_1)$ is the following: 
for any $x_0\in M\setminus (L_0\cup L_1)$
there exists a dense set $\mathcal{J}_\textup{reg}(L_0,L_1)$ of regular almost 
complex structures such that for every $J\in \mathcal{J}_\textup{reg}(L_0,L_1)$
there exists a $J$-holomorphic strip $u:\mathbb{R}\times [0,1]\sqrt{-1}\to M$ of 
Maslov-Viterbo index $2n-1$
such that $x_0\in u(D)$ with the usual boundary conditions. In this paper we do not
explore this condition in general. Instead we consider the case $L_0=L_1$.
Thus from now on,  we only consider the case $\textup{HF}_*(L,L)$.
Further, in order to avoid the Hamiltonian perturbation that is 
involved in the calculation of
$\textup{HF}_*(L,L)$ and present a clear picture of the phenomenon implicated
in our  computation we used  quantum homology $\textup{QH}_*(L)$ instead
 of $\textup{HF}_*(L,L)$. Thus $\Lambda$ stands for two different rings depending
it $\textup{HF}_*(L,L)$ or $\textup{QH}_*(L)$ is being used.

From Theorem \ref{t:invariance}, $\textup{QH}_*(\widetilde L)$ is independent
of the point in $(M,\omega)$ that is blown up. Further, the condition that $x_0$ does not
lie in any $J$-holomorphic strip of Maslov-Viterbo index $2n-1$ of Theorem \ref{t:noholo}
is replace by the condition that  $x_0$ does not lie in 
any $J$-holomorphic disk  of Maslov index $2n.$ In this case, $\textup{QH}_*(\widetilde L)
\simeq \textup{QH}_*(L)$. Henceforth, pearly trajectories that contain a 
$J$-holomorphic disk of Maslov index $2n$ induced new pearly trajectories
in the one-point blow up.
That is, 
the only possibility
to for $\textup{QH}_*(\widetilde L)$   to be distinct from 
$\textup{QH}_*( L)$, or at least to have new pearly trajectories
on the blow up
is if all points in  complement of $L\subset (M,\omega)$ are
look-alike from the Floer homology perspective.

This naturally leads to the concept of
uniruled introduced by P. Biran and O. Cornea \cite[Definition 1.1.2]{biran-cornea-rigidity}. 
The pair $(M,L)$ is said to be {\em $(p,q)$-uniruled  of order $k$} 
if   for any  $p$ distinct points $p_j\in M\setminus L$ and 
any $q$ distinct points $q_j\in L$, 
 there exists a subset
$\mathcal{J}_{\textup{reg}}\subset \mathcal{J}(M,\omega)$ of second category
such that for each $J\in \mathcal{J}_{reg}$ there exists
a $J$-holomorphic map $u:(D,\partial D)\to (M,L)$ such that
 $p_j\in u(D)$, $q_j\in u(\partial D)$ and $\mu_L([u])\leq k$.
Although the uniruled condition is very restrictive, it is the class of examples where
the LFH on the the blow up can differ from the LFH on the base manifold.

The next result summarise how the pearly differential 
changes in the pearl complex  blow up manifold compared with the
differential of the pearl complex in the base manifold.
For simplicity of the statement we state the result
for four-manifolds.

\begin{thm}
\label{t:dif}
Let $L$ be an admissible Lagrangian in $(M,\omega)$ that is
$(1,2)$-uniruled of order $2$, $x_0\in M\setminus L$ a generic point and suppose that $\textup{dim}(M)=4$. 
If $p$ and $q$ in $L$ are critical points of a generic Morse-Smale function $f$
with respect to a generic Riemannian metric $g$ on $L$ such that $\textup{ind}(p)-\textup{ind}(q)-1=-2$, then
$$
\langle \widetilde d(p), q\rangle=  \langle  d(p), q\rangle +_{\mathbb{Z}_2} k
$$
where $k$ is the number, mod $2$, of classes $A\in \textup{H}_2(M,L;\mathbb{Z})$ such that for some 
$J\in \mathcal{J}_\textup{reg}(M,L)$,
 such that $\iota^*J=J_0$ where $\iota$ is the symplectic embedding of the ball,
the moduli space of pearly trajectories
$ \mathcal{P}(p,q,A;g,f,J)$ is non empty,
 $\mu_L(A)=4$ and there is a $J$-holomorphic disk $u$ such that
 $x_0\in u(D) $ and $A=[u]$.
 \end{thm}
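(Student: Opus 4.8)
The plan is to compare the two pearl complexes through the blow-down map $\pi$, in the spirit of the proofs of Theorems \ref{t:invariance} and \ref{t:noholo}, and to read the coefficient $\langle\widetilde d(p),q\rangle$ off the resulting correspondence of moduli spaces. I would start by fixing $J\in\mathcal{J}_\textup{reg}(M,L)$ with $\iota^*J=J_0$ and letting $\widetilde J$ be its lift to $(\widetilde M,\widetilde\omega_\rho)$: equal to $J$ away from $\pi^{-1}(\iota(B^{4}(\rho)))$ and standard near the exceptional divisor $E$, so that $E$ is $\widetilde J$-holomorphic and, as in the proof of Theorem \ref{t:noholo}, $\widetilde J\in\mathcal{J}_\textup{reg}(\widetilde M,\widetilde L)$. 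Pulling back $(f,g)$ to $\widetilde L$ by the diffeomorphism $\pi|_{\widetilde L}\colon\widetilde L\to L$, the Morse--Smale data — and hence the Morse parts of $d$ and $\widetilde d$ — agree verbatim, and since $\textup{ind}(q)=\textup{ind}(p)+1$ they contribute nothing to either $\langle d(p),q\rangle$ or $\langle\widetilde d(p),q\rangle$. Because $N_{\widetilde L}\geq 2$, any pearly trajectory from $p$ to $q$ contributing to $\langle\widetilde d(p),q\rangle$ carries exactly one non-constant $\widetilde J$-holomorphic disk, necessarily of Maslov index $2$; likewise $\langle d(p),q\rangle$ counts pearly trajectories from $p$ to $q$ that carry a single $J$-holomorphic disk of Maslov index $2$.

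The core of the proof would be to classify the $\widetilde J$-holomorphic disks $\widetilde u\colon(D,\partial D)\to(\widetilde M,\widetilde L)$ with $\mu_{\widetilde L}([\widetilde u])=2$ by the number $\ell:=[\widetilde u]\cdot[L_E]$. As $E$ is disjoint from $\widetilde L$, $\widetilde u$ is not contained in $E$, so positivity of intersections forces $\ell\geq0$; moreover $u:=\pi\circ\widetilde u$ is a $J$-holomorphic disk of $(M,L)$ with proper transform $\widetilde u$, its class $A:=[u]$ satisfies $\mu_L(A)=\mu_{\widetilde L}([\widetilde u])+2\ell=2+2\ell$ by Proposition \ref{p:maslovindx}, and $u$ passes through $x_0$ with total multiplicity $\ell$. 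As in the quantum-homology discussion following Theorem \ref{t:noholo}, where the disks of Maslov index $2n$ through $x_0$ are singled out, the genericity of $x_0$ and the hypothesis $\textup{dim}(M)=4$ restrict the disks that can occur in a rigid pearly trajectory from $p$ to $q$ to $\ell\in\{0,1\}$: for $\ell=0$, $\widetilde u$ is disjoint from $E$ and descends to a Maslov index $2$ disk of $(M,L)$ missing $x_0$; for $\ell=1$, $\widetilde u$ meets $E$ once and transversally, and descends to a Maslov index $4$ disk of $(M,L)$ through $x_0$, of which it is the proper transform. These constructions are mutually inverse: every Maslov index $2$ disk of $(M,L)$ missing $x_0$ lifts, and every Maslov index $4$ disk of $(M,L)$ through $x_0$ with multiplicity one — hence immersed there — has a proper transform that is a Maslov index $2$ disk of $(\widetilde M,\widetilde L)$ meeting $E$ once. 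Since the Morse flow segments live on $\widetilde L\simeq L$ and are untouched, this promotes to a bijection of pearly trajectories from $p$ to $q$, partitioning the contributors to $\langle\widetilde d(p),q\rangle$ into ``old'' ones (disk disjoint from $E$) and ``new'' ones (disk meeting $E$).

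It then remains to identify the two parts. For generic $x_0$ no Maslov index $2$ disk of $(M,L)$ meets $x_0$ — such disks, carrying one interior marked point, sweep out a subset of $M$ of dimension at most $(n-1)+2=3<4$ — so the old trajectories are in bijection with the Maslov index $2$ pearly trajectories from $p$ to $q$ in $(M,L)$, that is, with those counted by $\langle d(p),q\rangle$. A new trajectory is rigid, and its disk projects to a Maslov index $4$ disk of $(M,L)$ through $x_0$ sitting inside a pearly trajectory from $p$ to $q$; writing $A$ for the class of that disk, this yields $\mu_L(A)=4$, $\mathcal{P}(p,q,A;g,f,J)\neq\emptyset$, and a $J$-holomorphic disk $u$ with $x_0\in u(D)$ and $[u]=A$. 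Conversely, the hypothesis that $(M,L)$ is $(1,2)$-uniruled of order $2$ supplies, for each class $A$ meeting these requirements, the Maslov index $4$ disk through $x_0$ that joins the descending and ascending manifolds of $p$ and $q$, and one has to verify that each such class then contributes exactly $1\bmod 2$ to the count of new trajectories; summing over $A$ produces the term $k$ and the asserted identity $\langle\widetilde d(p),q\rangle=\langle d(p),q\rangle+_{\mathbb{Z}_2}k$. The step I expect to be the main obstacle is precisely this last one — equivalently, showing that for each eligible class $A$ the number of new pearly trajectories from $p$ to $q$ whose disk is the proper transform of a disk in class $A$ is odd — together with the regularity of $\widetilde J$ along $E$, where it is pinned to the standard structure and cannot be perturbed; both would be handled by the genericity of $x_0$ and the uniruled hypothesis, as in Theorems \ref{t:invariance} and \ref{t:noholo}.
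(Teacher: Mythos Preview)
Your proposal follows essentially the same approach as the paper's proof: fix a regular $J$ with $\iota^*J=J_0$, lift it to a regular $\widetilde J$ via Proposition~\ref{p:regular}, and use Proposition~\ref{p:maslovindx} to split the Maslov index $2$ pearly trajectories upstairs into those coming from Maslov $2$ trajectories downstairs (which generically miss $x_0$ by the $3$-vs-$4$ dimension count you give) and those coming from Maslov $4$ disks through $x_0$ in the class $\widetilde A-L_E$. The paper's argument is in fact terser than yours on the point you flag as the main obstacle --- the per-class odd count of new trajectories --- so your identification of that as the delicate step is accurate, but the paper does not elaborate on it either.
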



Our technique gives an alternative proof  of the fact that the Lagrangian
induced by the Clifford  torus
in the one-point blow up of $(\mathbb{C}P^2,\omega_\textup{FS})$ is wide.
Since the induced Lagrangian torus on the  
one-point blow of $(\mathbb{C}P^n,\omega_\textup{FS})$ is a toric fiber, this
fact follows from the theory of K. Fukaya, Y.-G. Oh, H. Ohta and K. Ono of 
\cite{FO3-toricI}. It was also proved by M. Entov and L. Polterovich in
 \cite{entov-pol-rigid}.
Recall from \cite[Cor. 1.2.12]{biran-cornea-rigidity} that the Clifford torus  
$\mathbb{T}_\textup{Cliff}\subset
(\mathbb{C}P^2,\omega_\textup{FS})$ is $(1,1)$-uniruled of order 4.
The prof of the next result is given in Section \ref{s:clif}.

\begin{thm}
\label{t:cliff}
After blowing up one point the proper transform of the Clifford torus $\mathbb{T}_\textup{Cliff}\subset
(\mathbb{C}P^2,\omega_\textup{FS})$
in $(\widetilde{\mathbb{C}P}\,^2,\widetilde\omega_\rho)$ is also a wide Lagrangian.
\end{thm}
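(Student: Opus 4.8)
The plan is to apply Theorem \ref{t:dif} (or rather its pearl-complex analogue in the quantum homology setting) to the Clifford torus $\mathbb{T}_{\textup{Cliff}}\subset(\mathbb{C}P^2,\omega_{\textup{FS}})$, for which all the hypotheses are readily checked. First I would verify admissibility: the Clifford torus is monotone with monotonicity constant $\lambda$ determined by the Fubini--Study normalization, its minimal Maslov number is $2$, and since $\dim(\mathbb{C}P^2)=4$ we have $2(n-1)/\lambda$ small enough that a ball of the prescribed weight $\rho$ with $\rho^2=(n-1)/((\lambda/2)\pi)=1/3$ fits in the complement of the torus (the complement of a toric fiber contains the three coordinate balls, and one checks the relevant Gromov width inequality $c(\mathbb{C}P^2\setminus\mathbb{T}_{\textup{Cliff}},\omega_{\textup{FS}})>2/\lambda$ holds). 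One must also check condition (c), that $N_{\widetilde L}\geq 2$; since the torus is $2$-dimensional the proper transform $\widetilde L$ is Hamiltonian isotopic to a toric fiber in the blow up, which is monotone with minimal Maslov number $2$. Finally, the integrability hypothesis $\iota^*J=J_0$ is satisfied because $\mathbb{C}P^2$ and its blow up carry the standard integrable complex structure, which is regular here by the classical transversality results for toric fibers.

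Next I would count the relevant discs. By \cite[Cor.\ 1.2.12]{biran-cornea-rigidity} the Clifford torus is $(1,1)$-uniruled of order $4$, and more precisely the Maslov-$4$ classes $A\in\textup{H}_2(\mathbb{C}P^2,\mathbb{T}_{\textup{Cliff}};\mathbb{Z})$ through a generic point $x_0$ in the complement and a generic boundary point are classified: in the standard complex structure these come from the pencil of lines through $x_0$, together with bubbled configurations, and the count of honest holomorphic discs of Maslov index $4$ passing through a generic interior point in the complement and a generic boundary point turns out to be even modulo $2$. This is the heart of the matter: one must enumerate the classes $A$ with $\mu_L(A)=4$ for which $\mathcal{P}(p,q,A;g,f,J)$ is non-empty and some $J$-holomorphic disc in class $A$ passes through $x_0$, and show this number $k$ in Theorem \ref{t:dif} is $\equiv 0 \pmod 2$ — so that $\langle\widetilde d(p),q\rangle=\langle d(p),q\rangle$ — while simultaneously recalling that $\mathbb{T}_{\textup{Cliff}}$ is itself wide on $\mathbb{C}P^2$ (by \cite{biran-cornea-rigidity}), so $d=0$, whence $\widetilde d=0$ as well.

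With $\widetilde d=0$ established on the part of the differential governed by Theorem \ref{t:dif}, I would then argue that the full pearl differential on $\widetilde L$ vanishes: the only pearly trajectories that can differ between $\widetilde M$ and $M$ are, by Theorem \ref{t:noholo} and the surrounding discussion, those involving a $J$-holomorphic disc of Maslov index $2n=4$ through the blown-up point, and these are exactly the ones accounted for above; all other pearly trajectories are unchanged under the blow-up since $\pi$ is a symplectomorphism away from the exceptional divisor. Since $\mathbb{T}_{\textup{Cliff}}$ is wide in $\mathbb{C}P^2$, the unchanged part of the differential is already zero, and the potentially-new contributions vanish mod $2$ by the disc count. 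Therefore $\widetilde d=0$, i.e.\ $\textup{QH}_*(\widetilde{\mathbb{T}}_{\textup{Cliff}})\simeq \textup{H}_*(\mathbb{T}^2)\otimes\Lambda$, so $\widetilde{\mathbb{T}}_{\textup{Cliff}}$ is wide.

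The main obstacle I anticipate is the explicit enumeration of the Maslov-$4$ discs and the parity computation: one needs to know precisely which classes $A$ contribute to $\mathcal{P}(p,q,A;g,f,J)$ with a disc through $x_0$, and to pair them off (or otherwise show the count is even). This requires either a direct analysis of the moduli of Maslov-$4$ holomorphic discs on $\mathbb{C}P^2$ with boundary on the Clifford torus through a generic interior point — using the degeneration of such discs into a Maslov-$2$ disc glued to a line, or into two Maslov-$2$ discs — or an appeal to the known structure of the superpotential of the Clifford torus (the Maslov-$2$ disc count being three, and the Maslov-$4$ contributions organizing into an even total). A secondary, more technical point is confirming that the standard integrable $J$ on the blow up lies in $\mathcal{J}_{\textup{reg}}(\widetilde M,\widetilde L)$, i.e.\ that no negative-Chern-number sphere bubbling obstructs regularity for the relevant low-index moduli spaces; this should follow from automatic transversality in dimension $4$ together with the monotonicity of $\widetilde L$.
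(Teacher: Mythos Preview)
Your overall approach matches the paper's: verify admissibility, use the standard integrable $J$ (regular by known results), invoke the framework of Theorem~\ref{t:dif}, and show that the number $k$ of Maslov-$4$ classes contributing new pearly trajectories is even, so that $\widetilde d=d=0$.

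The gap is that you do not actually carry out the disc count; you only identify it as the remaining obstacle. The paper's proof is precisely this computation, done explicitly. It fixes a Morse--Smale function with critical points $p_0,p_1^a,p_1^b,p_2$, takes $x_0=[1:a:b]$ with $0<|a|<|b|<1$, and then uses Cho's classification of $J$-holomorphic discs on $(\mathbb{C}P^2,\mathbb{T}_{\textup{Cliff}})$ \cite[Theorem~10.1]{cho-floer}. The six Maslov-$4$ classes are $A_j+A_k$ for $j,k\in\{0,1,2\}$; for each pair of critical points (say $p_0$ and $p_1^a$) one checks which of these classes both admit a disc through $x_0$ and have boundary meeting the relevant unstable manifold. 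For $(p_0,p_1^a)$ the answer is exactly two classes, $A_1+A_2$ and $A_0+A_2$, so $k=2\equiv 0$; the other three pairs are handled the same way. Your proposed routes (degeneration into Maslov-$2$ pieces, or superpotential bookkeeping) would likely work too, but the paper's argument is more direct and pair-specific: it is not a single global parity statement but a verification that $k=2$ for each of the four relevant $(p,q)$.

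A minor correction: your description of Maslov-$4$ discs as coming from ``the pencil of lines through $x_0$'' is not quite right; the relevant objects are the disc classes $A_j+A_k$ with boundary on the torus, and Cho's classification is the tool that enumerates them.
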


It is also interesting to study the same problem with the possibility of varying the weight of the blow up.
However in order to do this, it will be required to use the theory of 
Kuranishi structures of
 K. Fukaya, Y.-G. Oh, H. Ohta and K. Ono \cite{fooo, foooII}
for LFH and to consider unobstructed Lagrangian submanifolds.
No attempt to understand such phenomenon is presented here.

\section{Review of $\textup{HF}_*(L_0,L_1)$ and $\textup{QH}_*(L)$}
\label{s:lfh }
\subsection{Lagrangian Floer Homology}
Throughout this note  $(M,\omega)$  will denote a symplectic manifold that is
  either closed
or convex at infinity,
$J=\{J_t\}_{0\leq t\leq 1}$
a family of $\omega$-compatible almost complex structures, and
$L_0$ and $L_1$
compact  connected Lagrangian submanifolds that  intersect  transversally.
Let $\mathcal{X}(L_0,L_1)$ denote the set of  intersection
points. Then for $p$ and $q$ in  $\mathcal{X}(L_0,L_1)$
and  $\beta\in \pi_2(M, L_0\cup L_1)$
denote by $\widehat{\mathcal{M}}(p,q,  \beta, J)$ the set of smooth maps
$u:\mathbb{R}\times [0,1]\to M $ such that:

\begin{itemize}
\item satisfy the boundary conditions
$$
u(s,0)\in L_0, \textup{ and } u(s,1)\in L_1,   \textup{ for all }s\in \mathbb{R}
$$
and
$$
\lim_{s\to -\infty} u(s,t) =q \textup{ and }
\lim_{s\to +\infty} u(s,t) = p;
$$
\item represent the class $\beta$,  $[u]=\beta$ and
\item are $J$-holomorphic,
$$
\overline\partial_J(u):=\frac{•\partial }{\partial s}u(s,t)
+ J_t \frac{•\partial }{\partial t}u(s,t) =0.
$$
\end{itemize}

The moduli space $\widehat{\mathcal{M}}(p,q,  \beta, J)$ admits an action
of $\mathbb{R}$, given by $r.u(s,t)=u(s-r,t)$.
Denote by ${\mathcal{M}}(p,q,  \beta, J)$
the quotient space of the action.
Elements of ${\mathcal{M}}(p,q,  \beta, J)$ are called holomorphic
strips; they also received the name of holomorphic
disks
since $\mathbb{R}\times [0,1]i$ is conformally equivalent to the
closed disk minus two points on the boundary.

In some cases the space $\widehat{\mathcal{M}}(p,q,  \beta, J)$ is in fact a smooth manifold.
To that end, take into account the linearised operator $D_{\overline\partial,u}$ of $\overline\partial_J$ at
$u\in \widehat{\mathcal{M}}(p,q,  \beta, J)$. Then for integers $k$ and $p$ such that $p>2$ and
 $k>p/2$
we have  the Sobolev space of vector  fields whose $k$-weak derivatives
exist and lie in $L_p$, and with boundary restrictions;
$$
W_k^p(u^*TM;L_0,L_1)\!:=\!\{ \xi\!\in W_k^p(u^*TM) \left|  \xi(s,0)\!\in TL_0,  \xi(s,1)\in TL_1
\textup{ for all } s\in\mathbb{R}
\right.\! \}.
$$
There exists $\mathcal{J}_\textup{reg}(L_0,L_1)$,  a dense subset of $\omega$-compatible almost complex
structures  
 in $ \mathcal{J}(L_0,L_1)$ such that for
$J\in \mathcal{J}_\textup{reg}(L_0,L_1)$
the linearised operator
$$
D_{\overline\partial(J),u}: W_k^p(u^*TM;L_0,L_1) \to L_p(u^*TM).  
$$
is Fredholm and surjective for all $u\in\widehat{\mathcal{M}}(p,q,  \beta, J)$.
Elements of $\mathcal{J}_\textup{reg}(L_0,L_1)$ are called regular.
In this case the index of $D_{\overline\partial(J),u}$ equals the Maslov index $\mu_{L_0,L_1}([u])$
of the homotopy type of $u$ in $\pi_2(M,L_0\cup L_1)$. Note that in the case when $J$ is regular
the dimension of the moduli space $\widehat{\mathcal{M}}(p,q,  \beta, J)$ is independent
of the regular  almost complex structure.

Let
$\textup{CF}(L_0,L_1)$ denote  the $\Lambda$-vector space generated by the intersection points
$\mathcal{X}(L_0,L_1)$.
Here $\Lambda$ stands for  the Novikov field
$$
\Lambda:=
\left
\{
  \sum_{j=0}^\infty a_j T^\lambda_j \left| a_j\in \mathbb{Z}_2,
\lambda_j\in \mathbb{R}, \lim_{j\to \infty}\lambda_j=\infty\right.  \right\}. 
$$
In the case when $[u]\in {\mathcal{M}}(p,q,  \beta, J)$,
 $\mu_{L_0,L_1}([u])=1$ and $J$ is regular
the moduli space ${\mathcal{M}}(p,q,  \beta, J)$ is 0-dimensional and compact,
 thereby  a finite
set of points.
Set $\#_{\mathbb{Z}_2} {\mathcal{M}}(p,q,  \beta, J) $ to
be the  module $2$ number of points of ${\mathcal{M}}(p,q,  \beta, J) $. The
Floer differential $\partial_J: \textup{CF}(L_0,L_1)\to \textup{CF}(L_0,L_1)$
is defined as
\begin{eqnarray}
\label{e:partial}
\partial_J (p): =\sum_{
\substack{
            q\in \mathcal{X}(L_0\cap L_1)\\
            [u] :     
            \textup{index}([u])=1
            }
}\#_{\mathbb{Z}_2} {\mathcal{M}}
(p,q,  [u], J) \,T^{\omega([u])} \,q.
\end{eqnarray}

If the Lagrangian submanifolds
$L_0$ and $L_1$ are monotone
and the minimal Maslov number of $L_0$ and $L_1$  
is greater than or equal to three, then $\partial_J\circ\partial_J=0$.
In this case the Lagrangian  Floer homology of $(L_0,L_1)$ is defined as
$$
\hf(L_0,L_1):=\frac{\textup{ker }\partial_J}{\textup{im }\partial_J}.
$$
Is important to note that the homology group $\textup{HF}(L_0,L_1)$ does not depend on the
regular $\omega$-compatible almost complex structure. 
When $L_0$ and $L_1$ are Hamiltonian
isotopic and the minimal Maslov number is two, the differential also squares to zero and 
 the Lagrangian  Floer homology is also defined in this case.

The role played by the coefficient field $\Lambda$ becomes essential in the definition of
the differential $\partial$. In principle the sum in Eq. (\ref{e:partial})
can be infinite, but by Gromov's compactness there are only finitely many homotopy classes
whose energy is below a determined value.  Hence $\Lambda$ assures that
Eq. (\ref{e:partial}) is well-defined.

For further details in the definition  of Lagrangian Floer homology in the monotone case
see \cite{oh-floercoho}; and also \cite{auroux-a} and \cite{fooo} for a more broader class of symplectic manifold
where Lagrangian Floer homology is defined.

\subsection{Lagrangian quantum homology}
The Lagrangian quantum homology is an invariant of a {\em single} Lagrangian submanifold.
Consider $L\subset (M,\omega)$ a closed monotone Lagrangian submanifold of minimal Maslov number
greater than  or equal to two. 
Further, fix a Riemannian metric $g$ on $L$ and Morse-Smale function
$f:L\to\mathbb{R}$. And on $(M,\omega)$ fix  an $\omega$-compatible almost complex
structure $J\in\mathcal{J}(M,\omega)$.

In the setting of quantum homology of $L\subset (M,\omega)$, $\Lambda$ stands
for $\mathbb{Z}_2[t,t^{-1}]$ where the degree of $t$ is $-N_L.$ Since from the 
context it will be clear if we are considering LFH or quantum homology; we make no
distinction on the notation of $\Lambda$.

Consider the $\Lambda$-module generated by the critical points of $f$
\begin{eqnarray*}
\mathcal{C}(g,f,J):=\mathbb{Z}_2\langle \textup{Crit}(f)\rangle\otimes\Lambda
\end{eqnarray*}
So far, only information about $L$ has been used. 
The ambient manifold $(M,\omega)$ 
enters in the definition of the differential. Let $\textup{H}_2^D(M,L)$ denote the image of
the Hurewicz homomorphism $\pi_2(M,L)\to \textup{H}_2(M,L)$. 
Fix $x,y\in \textup{Crit}(f)$ and $A\in \textup{H}_2^D(M,L)$ with $A\neq 0$. An {\em $A$-pearly trajectory} from $x$ to $y$,
denoted by $(u_1,\ldots,u_r; l_0,\ldots l_{r})_A$, is by definition a collection of
\begin{itemize}
\item $u_j:(D,\partial D)\to (M,L)$ non constant $J$-holomorphic disks for $j\in\{ 0,\ldots, r\}$,
\item $l_0:(-\infty,b_0]\to L$ and  $l_r:[a_r,\infty)\to L$  flow rays of $-\nabla f$, 
\item $l_j:[a_j,b_j]\to L$ flow cords of $-\nabla f$ for $j\in\{ 1,\ldots, r-1\}$.
\end{itemize}
These objects are related by the following conditions:
\begin{itemize}
\item $A=[u_1]+\cdots +[u_r]$,
\item
$x=\lim_{t\to-\infty}l_0(t)$, 
\item
$y=\lim_{t\to\infty}l_r(t)$, 
\item 
$l_j(b_j)=u_{j-1}(-1)$ for $j\in\{ 0,\ldots, r-1\}$, and
\item 
$l_j(a_j)=u_{j}(1)$ for $j\in\{ 1,\ldots, r\}$.
\end{itemize}

Two $A$-pearly trajectories 
$(u_1,\ldots,u_r; l_0,\ldots l_{r})_A$ and $(u^\prime_1,\ldots,u^\prime_s; l_0^\prime,\ldots l_{s}^\prime)_A$
are said to be equivalent if $r=s$ and for every $j\in\{1,\ldots, r\}$ there exists $\sigma_j\in \textup{Aut}(D)$
such that $\sigma_j(-1)=-1, \sigma_j(1)=1$ and $u^\prime_j=u_j\circ\sigma_j.$
The collection of all equivalence classes of $A$-pearly trajectories  is denoted by $\mathcal{P}(x,y,A;g,f,J)$.
In the case when $A=0$, by definition, the space $\mathcal{P}(x,y,0;g,f,J)$ consists of
unparameterised  flow lines of $-\nabla f$ that connect $x$ to $y$. 
That is, Morse trayectories.

The space  $\mathcal{P}(x,y,A;g,f,J)$ is a smooth manifold of dimension
\begin{eqnarray*}
\delta(x,y,A):= \textup{ind}_f(x)-\textup{ind}_f(y)-1 +\mu(A). 
\end{eqnarray*}
where $\textup{ind}_f(\cdot)$ stands for the Morse index with respect to $f$ of the critical point.
Recall that the Riemannian metric $g$ and $f:L\to\mathbb{R}$   are subject to the condition
that $f$ is a Morse-Samale function. If follows from \cite[Prop. 3.2]{biran-octav-lagquan},
see also \cite{biran-cornea-rigidity}, that there is dense subset $\mathcal{J}_\textup{reg}(M,L)$ in the
space of $\omega$-compatible almost complex structures $\mathcal{J}(M,\omega)$ such that for $J\in 
\mathcal{J}_\textup{reg}(M,L)$ if $\delta(x,y,A)=0$ then $\mathcal{P}(x,y,A;g,f,J)$ is a finite
set.

Fix $J\in \mathcal{J}_\textup{reg}(M,L)$. Then for $x\in \textup{Crit}(f)$ the differential
is defined as
\begin{eqnarray*}
d_J(x):= \sum_{y\in \textup{Crit}(f),\ \ A\in \textup{H}^D_2(M,L)} \#_{\mathbb{Z}_2} \mathcal{P}(x,y,A;g,f,J) 
\, T^{\mu(A)/N_L} \, y
\end{eqnarray*}
where the sum is over all $y$ and $A$ such that $\delta(x,y,a)=0$. Further, $d_J$ extends
$\Lambda$-linearly to $\mathcal{C}(g,f,J)$.  
 $(\mathcal{C}(g,f,J), d_J)$ is called {\em the 
 pearl complex of $(M,L,g,f,J)$}. It's homology is called the {\em Lagrangian quantum homology}
 of $L\subset (M,\omega)$
 \begin{eqnarray*}
\textup{QH}_*(L):=
\textup{H}_*(\mathcal{C}(g,f,J), d_J)
=
\frac{\textup{ker} d_J}{\textup{im} d_J}.
\end{eqnarray*}

Further details about the definition of 
the  $\Lambda$-module $\textup{QH}_*(L)$,  as well as the invariance 
of the Riemannian metric $g$, the Morse-Smale function $f$ and 
$J\in \mathcal{J}_\textup{reg}(M,L)$,  can be found in  
\cite{biran-octav-lagquan,biran-cornea-rigidity}.


\section{Review of the symplectic one-point blow up}
\label{s:symplecticblow}

The symplectic one-point blow up plays a fundamental role
 in this note. Hence we review
the definitions of the  complex  and  symplectic one-point blow up.
To that end,
consider the complex blow up of $\mathbb{C}^n$ at the origin
 $\Phi: \widetilde{\mathbb{C}^n}\to \mathbb{C}^n$, where $n>1$.
 That is
$$
\widetilde{\mathbb{C}^n}= \{(z,\ell)\in   \mathbb{C}^n \times \mathbb{C}P^{n-1} \; | \; z\in \ell
\}
$$
and the blow up map is given by $\Phi(z,\ell)=z$.  For $r>0$, let
$L(r):=\Phi^{-1}(\textup{int}(B^{2n}(r)))$ where  $B^{2n}(r)\subset \mathbb{C}^n$ is the closed ball and
 $\textup{int}  (\cdot)$ stands for the interior of the
set.

If $(M,J)$ is a complex manifold   and $\iota: (\textup{int}B^{2n}(r),J_0)\to
(M,J)$ is such that $\iota^*J=J_0$ and $x_0=\iota (0)$, then the complex blow up of $M$ at
$x_0$ is defined as
\begin{eqnarray}
\label{e:blow}
\widetilde M:= \left( M\setminus \{x_0\} \right)
\cup L(r)/\sim
\end{eqnarray}
where $z=\iota(z^\prime)\in
\iota (\textup{int}(B^{2n}(r))) \setminus \{x_0\}$ is identified with the unique point $(z^\prime,\ell_{z^\prime})\in L(r)$
and $\ell_{z^\prime}$ is the line determined by $z^\prime$.
So defined, $\widetilde M$ carries a unique complex structure
$\widetilde J$
 such that the blow up map
$\pi: (\widetilde M,\widetilde J)\to (M,J)$ is $(\widetilde J,J)$-holomorphic.
The preimage of the blown up point
 $\pi^{-1}(x_0)=E$ is called
the exceptional divisor. Further, the blow up map induces
 a  biholomorphic map $\widetilde M\setminus E
\to M\setminus\{x_0\}$.

The next task is to define the symplectic one-point blow up.  The symplectic blow up
relies on the complex blow up. However  there is not a unique symplectic blow up;
there is a whole family of symplectic forms on the one-point blow up.

As a first step we look at $(\mathbb{C}^{n},\omega_0)$ and
define a symplectic structure on  $\widetilde{\mathbb{C}^n}$.
Here
$\omega_0$ is the standard symplectic form on   euclidean space.
For $\rho>0$, consider  the symplectic form
$$
\omega(\rho):=\Phi^*(\omega_0)+\rho^2 pr^*(\omega_\textup{FS})
$$
 on $\widetilde{\mathbb{C}^n}$ where
$pr:\widetilde{\mathbb{C}^n}\to \mathbb{C}P^{n-1}$ is the canonical line bundle
and the Fubini-Study form $(\mathbb{C}P^{n-1},\omega_\textup{FS})$ is normalised so that
the area of every line is $\pi$.
Note that on the exceptional divisor the symplectic form $\omega(\rho)$ restricts to
$\rho^2 \omega_\textup{FS}.$
Hence in $(\widetilde{\mathbb{C}^n}, \omega(\rho)) $ the area of any
 line in the exceptional divisor is $\rho^2 \pi$.

Next, the symplectic form $\omega(\rho)$ is perturb in such a way so that
in the complement of a neighbourhood of the exceptional divisor agrees
with the standard symplectic form $\omega_0$.
Once this is done,
following the definition of the blow up manifold (\ref{e:blow}) it will
be possible to define a symplectic form on $\widetilde M$.

For $r>\rho$
let $\beta:[0,r]\to [\rho, r]$ be any
smooth function such that
$$
\beta(s) :=
\left\{
	\begin{array}{ll}
		\sqrt{\rho^2+s^2} & \mbox{for } 0\leq s\leq \delta \\
		s & \mbox{for  } r-\delta \leq s\leq r.
	\end{array}
\right.
$$
and on the remaining part takes any value as long as  $0<\beta^\prime(s)\leq 1$
for $0<s\leq r-\delta$.
Then  $F_\rho: L(r)\setminus E\to  \textup{int}(B^{2n}(r))\setminus B^{2n}(\rho)$   defined as
$$
F_\rho(z):= \beta(|z|)\frac{z}{|z|}
$$
is a diffeomorphism  such that
$\widetilde\omega(\rho):=F_\rho^*(\omega_0)$ is a symplectic form. So defined $\widetilde\omega(\rho)$
is  such that
\begin{itemize}
\item $\widetilde\omega(\rho)=\omega_0$
 on $\textup{int} (L(r) \setminus L(r-\delta) )$ and
\item $\widetilde\omega(\rho)=\omega(\rho)$ on $L(\delta)$.
\end{itemize}
We call $(  L(r), \widetilde\omega(\rho))$
the local model of the symplectic one-point blow up.

In order to define the symplectic blow up of $(M,\omega)$ at $x_0$,  it is requiere
a symplectic embedding  $\iota: (B^{2n}(\rho),\omega_0)\to (M,\omega)$
and an almost complex structure $J$ on $(M,\omega)$ such that
$\iota(0)=x_0$
and $\iota^*J=J_0$.
Notice that the symplectic embedding  $\iota$ extends to $\textup{int}  (B^{2n}(r))$ for $r$ such that
$r-\rho$ is small.


Finally, using the symplectic embedding as a symplectic chart and the local model
$(  L(r), \widetilde\omega(\rho))$ defined above,
 the symplectic form
of weight
$\rho$ on $\widetilde M$ is defined as
$$
\widetilde \omega_\rho :=
\left\{
	\begin{array}{ll}
		\omega  & \mbox{on } \pi^{-1}(M \setminus \iota B^{2n}(\sqrt{\rho^2+\delta^2}))\\
		\widetilde\omega(\rho) & \mbox{on  } L_{r}.
	\end{array}
\right.
$$
For further details and the dependence of the symplectic blow up on the choices that we made see
\cite{mcduffpol-packing}, \cite{ms} and \cite{msjholo}. The above observations are summarised
in the next proposition.

\begin{prop}
\label{p:propblow}
Let $(M,\omega)$ be a symplectic manifold,
$\iota: (B^{2n}(r), \omega_0)\to (M,\omega)$  a symplectic embedding  and $J$
a $\omega$-compatible almost complex structure such that
 $\iota(0)=x_0$ and $\iota^*J=J_0$. If
$\rho<r$, then  the symplectic blow up $\pi:(\widetilde M,\widetilde\omega_\rho)\to (M,\omega)$  of weight $\rho$
satisfies:
\begin{enumerate}
 \item $\pi: \widetilde M\setminus E\to M\setminus \{x_0\}$ is a diffeomorphism,
 \item $\pi^{*}(\omega)=\widetilde \omega_\rho$ on $\pi^{-1}(M\setminus \iota B^{2n}(r))$, and
 \item the area of the line in $E$ is $\rho^2\pi$.
\item $\widetilde J$ is  $\widetilde \omega_\rho$-compatible.
\end{enumerate}
\end{prop}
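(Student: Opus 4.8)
The plan is to verify the four assertions by tracing through the construction of the symplectic one-point blow up given above, since each item is essentially a bookkeeping statement about the gluing data. First I would address item (1). By the definition of $\widetilde M$ in Equation (\ref{e:blow}), the blow up is obtained from $M\setminus\{x_0\}$ by deleting $\iota(\textup{int}(B^{2n}(r)))\setminus\{x_0\}$ and gluing in $L(r)$ along the identification $z=\iota(z')\mapsto (z',\ell_{z'})$. Restricting $\pi$ away from the exceptional divisor $E=\pi^{-1}(x_0)$, the map $\widetilde M\setminus E\to M\setminus\{x_0\}$ is built from the biholomorphism $L(r)\setminus E\xrightarrow{\Phi} \textup{int}(B^{2n}(r))\setminus\{0\}$ composed with $\iota$, together with the identity on the complement; these patch to a diffeomorphism, which is exactly the assertion recorded right after (\ref{e:blow}).

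Next I would prove items (2) and (3) together, as both are read off from the explicit formula for $\widetilde\omega_\rho$. For item (2): the definition sets $\widetilde\omega_\rho=\omega$ on $\pi^{-1}(M\setminus \iota B^{2n}(\sqrt{\rho^2+\delta^2}))$, and since $\sqrt{\rho^2+\delta^2}<r$ for $\delta$ small, this region contains $\pi^{-1}(M\setminus \iota B^{2n}(r))$; under the identification from (1) this says $\pi^*\omega=\widetilde\omega_\rho$ there. For item (3): on $L(\delta)$ we have $\widetilde\omega_\rho=\widetilde\omega(\rho)=\omega(\rho)=\Phi^*\omega_0+\rho^2\, pr^*\omega_\textup{FS}$, and since a line in $E$ lies in the zero section where $\Phi$ is constant, the area of such a line equals $\rho^2$ times the $\omega_\textup{FS}$-area of a line, which is $\rho^2\pi$ by the chosen normalization. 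The only care needed is to check that the perturbation $F_\rho$ does not disturb the form on $L(\delta)$, which is guaranteed by the condition $\beta(s)=\sqrt{\rho^2+s^2}$ for $0\le s\le\delta$, forcing $F_\rho^*\omega_0=\omega(\rho)$ on $L(\delta)$.

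Finally, item (4): an almost complex structure compatible with $\widetilde\omega_\rho$ must be constructed, not merely asserted. On the region $\pi^{-1}(M\setminus \iota B^{2n}(\sqrt{\rho^2+\delta^2}))$, where $\widetilde\omega_\rho=\pi^*\omega$, the pullback $\pi^*J$ works. On $L(r)$, one uses the complex structure $\widetilde J_0$ of the complex blow up of $\mathbb{C}^n$; one checks $\widetilde\omega(\rho)$ is $\widetilde J_0$-compatible on $L(\delta)$ directly from $\omega(\rho)=\Phi^*\omega_0+\rho^2\, pr^*\omega_\textup{FS}$, both summands being compatible with $\widetilde J_0$. The main obstacle — and the one place requiring genuine work rather than unwinding definitions — is the intermediate annular region $L(r-\delta)\setminus L(\delta)$, where $\widetilde\omega(\rho)=F_\rho^*\omega_0$ interpolates between the two models: here one invokes the standard fact that the space of $\omega$-compatible almost complex structures is nonempty and contractible for any symplectic form, so compatible structures on the two ends extend over the annulus. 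Since the hypothesis $\iota^*J=J_0$ ensures the two candidate structures agree on the overlap collars, the pieces glue to a global $\widetilde\omega_\rho$-compatible $\widetilde J$. This is also the content of the constructions cited in \cite{ms,msjholo,mcduffpol-packing}, so I would simply reference those for the detailed patching and conclude.
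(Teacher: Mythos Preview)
The paper does not actually prove this proposition: it states it as a summary of the preceding construction and defers to \cite{mcduffpol-packing,ms,msjholo} for details. Your unwinding of items (1)--(3) is correct and matches exactly what the paper's construction records, so there is nothing to compare there beyond saying you have supplied the bookkeeping the paper omits.

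For item (4), however, there is a genuine gap in your argument. The $\widetilde J$ in the statement is not an arbitrary compatible structure to be constructed: it is the \emph{specific} almost complex structure, already fixed before the proposition, uniquely determined by the requirement that $\pi:(\widetilde M,\widetilde J)\to(M,J)$ be $(\widetilde J,J)$-holomorphic. On the annular region $L(r-\delta)\setminus L(\delta)$ this $\widetilde J$ is just $J_0$ (pulled back via $\Phi$), and the claim is that $F_\rho^*\omega_0$ is compatible with \emph{that} $J_0$. Your contractibility argument produces \emph{some} compatible structure over the annulus, but there is no reason it should coincide with $J_0$; the glued object you obtain would then fail to make $\pi$ holomorphic on the annulus. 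This matters because everything downstream in the paper---Propositions \ref{p:lift}, \ref{p:maslovindx}, \ref{p:regular}---relies on $\pi$ being $(\widetilde J,J)$-holomorphic, not merely on the existence of some compatible structure upstairs.

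The fix is to drop the contractibility detour and verify directly that $F_\rho^*\omega_0$ is $J_0$-compatible on the annulus. This is a short computation: $F_\rho(z)=\beta(|z|)\,z/|z|$ is $U(n)$-equivariant, so $F_\rho^*\omega_0$ is a $U(n)$-invariant $2$-form; writing $dF_\rho$ in radial/spherical components one finds it scales the radial direction by $\beta'(|z|)$ and the sphere directions by $\beta(|z|)/|z|$, and from this $J_0$-invariance and positivity of $F_\rho^*\omega_0(\cdot,J_0\cdot)$ follow since $\beta'>0$ and $\beta>0$. This is precisely the computation carried out in the references you cite at the end, so your closing appeal to \cite{ms,msjholo,mcduffpol-packing} is correct---but the contractibility paragraph preceding it should be replaced rather than supplemented by that citation.
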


From now on assume that $J$ on $(M,\omega)$ satisfies  the condition $\iota^*J=J_0$,
where $J_0$ is the standard complex structure on $\mathbb{C}^n$. It is well known that
this condition induces
 a unique  almost complex structure $\widetilde J$ on 
$\mtilde$
such that the blow up map
$\pi: \mtilde\to \m$ is  $(J,\widetilde J)$-holomorphic. In particular
$\pi: \widetilde M\setminus E\to M\setminus \{x_0\}$ is a
biholomorphic map.

However note that is possible that the set 
$\mathcal{J}_\iota:=\{ J\in \mathcal{J}(M,\omega) | \iota^*J=J_0\}$
might not contain a regular almost complex structure. 
For the map $\iota^*: \mathcal{J}(M,\omega) \to \mathcal{J}(\iota (B^{2n}(\rho)),\omega)$ that
restricts an almost complex structure to the image of the embedded ball
is continuous. Hence $\mathcal{J}_\iota=(\iota^*)^{-1}(J_0)$ is closed.

\begin{lem}
Let $\iota: (B^{2n}(r), \omega_0)\to (M,\omega)$   be a symplectic embedding 
and $\mathcal{J}_\iota:=\{ J\in \mathcal{J}(M,\omega) | \iota^*J=J_0\}$.
Then $\mathcal{J}_\iota\subset \mathcal{J}(M,\omega) $ is closed and its complement is dense
\end{lem}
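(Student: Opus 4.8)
The plan is to verify the two assertions separately: that $\mathcal{J}_\iota$ is closed, and that its complement is dense in $\mathcal{J}(M,\omega)$.

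For closedness, I would argue exactly as indicated in the paragraph preceding the statement. Restriction of an almost complex structure to the open subset $\iota(B^{2n}(\rho))$ is a continuous map $\iota^*:\mathcal{J}(M,\omega)\to \mathcal{J}(\iota(B^{2n}(\rho)),\omega)$ with respect to the $C^\infty$-topology (uniform convergence of all derivatives on compact subsets), since it is just precomposition/pushforward by the fixed smooth embedding $\iota$. The standard complex structure $J_0$, transported via $\iota$, is a single point in the target, hence closed. Therefore $\mathcal{J}_\iota=(\iota^*)^{-1}(\iota_*J_0)$ is closed.

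For density of the complement, fix any $J\in\mathcal{J}_\iota$ and any $C^\infty$-neighbourhood $U$ of $J$; I must produce $J'\in U$ with $\iota^*J'\neq J_0$. Choose a point $x_1\in M$ lying outside the closed ball $\iota(B^{2n}(r))$ but inside the slightly larger image where $\iota$ is still defined, or more simply any point of $\iota(\mathrm{int}\,B^{2n}(r))\setminus\{x_0\}$, and a compactly supported bump function $\chi$ near that point. Using the fact that the space of $\omega$-compatible almost complex structures is an infinite-dimensional manifold whose tangent space at $J$ consists of sections $Y$ of $\mathrm{End}(TM)$ satisfying $YJ+JY=0$ and $\omega(Y\cdot,\cdot)+\omega(\cdot,Y\cdot)=0$ (symmetric with respect to the metric $\omega(\cdot,J\cdot)$), pick such a $Y$ that is nonzero at $x_1$ and supported in a small neighbourhood of $x_1$ contained in $\iota(\mathrm{int}\,B^{2n}(r))$. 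Exponentiating, $J'_s:=(\mathrm{Id}+\tfrac{s}{2}Y)J(\mathrm{Id}+\tfrac{s}{2}Y)^{-1}$ (or any standard path of compatible structures with $\tfrac{d}{ds}\big|_{s=0}J'_s=Y$) gives, for $s$ small, an element of $U$ which differs from $J_0$ on $\iota(B^{2n}(\rho))$ precisely because $Y\neq 0$ somewhere in that ball — so $\iota^*J'_s\neq J_0$ and $J'_s\in U\setminus\mathcal{J}_\iota$. Hence every neighbourhood of every point of $\mathcal{J}_\iota$ meets the complement, i.e. the complement is dense.

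The only mildly delicate point is ensuring the perturbation $Y$ can be taken $\omega$-compatible-preserving and supported inside $\iota(B^{2n}(\rho))$ while being nonzero there: this is routine since the space of such infinitesimal deformations at any point is nonzero (the fibre is a nonempty open set in a vector space of dimension $n^2+n>0$ for $n\geq 1$), and a cutoff keeps the support small. I expect this bookkeeping — writing the explicit path of compatible structures and checking it stays in $U$ for small $s$ — to be the main, though entirely standard, technical step.
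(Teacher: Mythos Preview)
Your closedness argument is exactly the paper's: the paper records, in the paragraph immediately preceding the lemma, that $\iota^*$ is continuous and $\mathcal{J}_\iota=(\iota^*)^{-1}(J_0)$, hence closed. The lemma itself carries no proof block in the paper, so in particular the paper gives no argument for the density of the complement; your perturbation argument supplies that missing half and is correct.

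One small point to clean up: your first suggestion, placing $x_1$ \emph{outside} $\iota(B^{2n}(r))$, would not work, since a perturbation supported there leaves $\iota^*J$ unchanged and the resulting $J'_s$ would still lie in $\mathcal{J}_\iota$. Your alternative choice $x_1\in\iota(\mathrm{int}\,B^{2n}(r))$ is the right one, and the rest of the argument (nontrivial tangent direction $Y$ supported in the ball, path of compatible structures tangent to $Y$, smallness of $s$) goes through as you indicate. I would simply delete the first option and keep the second.
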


\section{Lagrangian submanifolds and holomorphic disks}
\label{s:Laganddisk}

\subsection{Lift of holomorphic disks}
Fix a symplectic embedding $\iota:  (B^{2n}(\rho),\omega_0)\to (M,\omega)$
and  set   $x_0:= \iota(0)$ to be the base point.
Consider $L\subset (M,\omega)$ a
Lagrangian submanifold
such that $L\cap \iota B^{2n}(\rho)$ is empty.
Then by part (2) of  Proposition
\ref{p:propblow} it follows that $\widetilde L:=\pi^{-1}(L)$
is a Lagrangian submanifold in $(\widetilde M,
\widetilde\omega_\rho)$. However if $L$ is such that $L\cap \iota B^{2n}(\rho)$ is not empty,
then $\widetilde L$
is not necessarily a Lagrangian submanifold, even if
$x_0\notin L$.


Let $J$  be a $\omega$-compatible almost complex structure on
$\m$  as in Section \ref{s:symplecticblow}
and $\widetilde J$ the unique $\widetilde\omega_\rho$-compatible
almost complex structure on $\mtilde$ such that the blow up map
$\pi: \m\to \mtilde$ is  $(J,\widetilde J)$-holomorphic. Hence
$\pi: \widetilde M\setminus E\to M\setminus \{x_0\}$ is a
biholomorphic map, therefore  any  $\widetilde J$-holomorphic disk $\widetilde u:
(D,\partial D)\to (\widetilde M,\widetilde L)$   projects to a $J$-holomorphic disk
$\pi\circ \widetilde u$ on $(M,L)$.
And vice versa, if $u$
is a $J$-holomorphic disc on $(M,L)$ such that
$x_0\notin u(D)$, then $\widetilde u:=
\pi^{-1}\circ u$ is a $\widetilde J$-holomorphic disc in $(\widetilde M,\widetilde L)$.
Recall that we are assuming that the Lagrangian submanifold $L\subset\m$
does not contain the base point $x_0$.

It only remains to analyse the  lift of the $J$-holomorphic disk $u: (D,\partial D)\to (M,L)$ in the
case when
$x_0\in u(D)$.
Since the base point $x_0$ is not on the Lagrangian submanifold
$L\subset \m$ and  the almost complex structure satisfies $\iota ^*J=J_0$,
in order to define the lift of $u$ to $\mtilde$  we ignore
the submanifold $L$ and  consider the case $(M,\omega)=(\mathbb{C}^n,\omega_0)$
blown up at the the origin. Thus $x_0=0$  and $u:D\to \mathbb{C}^n$ is holomorphic with respect to the
standard complex structure
and $0\in u(D)$.

For
let $u:D\to \mathbb{C}^n$ be a non constant holomorphic map such that
$u(0)=0$. 
Then each non constant component function  $u_j:D\to \mathbb{C}$  of $u$
can be written as $u_j=z^{k_j}h_j$ where $k_j$  is the order of the zero
of $u_j$ at $0\in D$. Thus  $k_j$  is a positive integer and
$h_j$ is holomorphic function that  does not vanish at $0$. Except if
 $u_j\equiv 0$, in this case we set  $k_j$ to be equal to $\infty.$
Thus the holomorphic map $u$ can be expressed as
\begin{eqnarray}
\label{e:orderlift}
u(z)=z^k (\hat h_1(z),\ldots,\hat h_n(z))
 \end{eqnarray}
where $k:=\min{k_j}$ and  at least one coordinate function $\hat h_j$
does not vanish at $0$
since we assumed that $u$ is non constant. 
The lift
$\widetilde u:D\to \widetilde{\mathbb{C}}^n$
of $u$ is defined as
\begin{eqnarray}
\label{e:ulift}
\widetilde u (z):  
=( u(z)  ,  [\hat h_1(z) :\cdots : \hat h_n(z)]).
\end{eqnarray}
So defined, $\widetilde u$ is holomorphic and projects to $u$ under the blow up map.
Notice that if $\widetilde u_0$ is another $\widetilde J$-holomorphic lift of $u$, then
$\widetilde u$ and $\widetilde u_0$ agree on $D\setminus \widetilde u^{-1}(E)$.  Since the maps
are $\widetilde J$-holomorphic they agree on all $D$, thus the holomorphic lift of $u$ is unique.


\begin{rema}
If $\psi:\mathbb{C}^n\to \mathbb{C}^n$ is a biholomorphic map  such that
$\psi(0)=0$ and $u:D\to \mathbb{C}^n$ is as above then the factorisation of
$\psi\circ u$   as in Eq. (\ref{e:orderlift}) gives the same value of $k$ as that of $u$. Thus $k$ is
 independent of the coordinate system.  
\end{rema}

\smallskip

Now that we have defined the lift  of $u:(D,\partial D)\to (M, L)$ to $\mtilde$, there is one more
consideration that needs  attention; the behaviour of $u$ at the blown up point $x_0$.
If  $u:(D,\partial D)\to (M, L)$ is a non constant $J$-holomorphic disk and
$z\in D\setminus \partial D$ is such that $u(z)=x_0$,  then
we define the {\em multiplicity
of $u$ at $z$}   has the integer $k\in \{0,1,\ldots ,\infty \}$
that appears in Eq. (\ref{e:orderlift}). Also we
define the {\em multiplicity  of $u$
at $x_0$} as $k_1+\cdots+k_r$ where $u^{-1}(x_0)=\{z_1,\ldots ,z_r\}$ and the multiplicity
of $u$ at $z_j$ is $k_j$. In the case when $x_0$ is not in the image of $u$,
we say that $u$
 has multiplicity zero at $x_0$.  Recall that since $u$ is $J$-holomorphic, the preimage of a point under $u$
is a finite set.


\begin{prop}
\label{p:lift}
Let $J$ be an almost complex structure on $(M,\omega)$ as above,  such that $\iota ^*J=J_0$
and $\widetilde J$ the unique almost complex structure on $\mtilde$
such that $\pi$ is $(\widetilde J, J)$-holomorphic.
If $ u: (D,\partial D)\to (M,L)$ a non constant  $J$-holomorphic disk,
 then there exists a unique
$\widetilde J$-holomorphic map
$\widetilde u: (D,\partial D)\to (\widetilde M,\widetilde L)$ such that
$\pi\circ \widetilde u=u$. Moreover if $u$ has multiplicity
$k$ at $x_0$, then $\widetilde u \cdot E=k$.
\end{prop}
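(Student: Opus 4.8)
The plan is to reduce everything to the local model near $x_0$ and then patch. Away from $x_0$ the claim is already settled by the discussion preceding the statement: since $\pi\colon\widetilde M\setminus E\to M\setminus\{x_0\}$ is a biholomorphism intertwining $\widetilde J$ and $J$, the map $\widetilde u_{\mathrm{out}}:=\pi^{-1}\circ u$ is a well-defined $\widetilde J$-holomorphic map on $D\setminus u^{-1}(x_0)$, with boundary in $\widetilde L=\pi^{-1}(L)$ because $u(\partial D)\subset L$ and $L$ misses $\iota(B^{2n}(\rho))$. So the entire issue is to extend $\widetilde u_{\mathrm{out}}$ holomorphically across the finitely many interior points $z_1,\dots,z_r$ with $u(z_j)=x_0$ (finiteness holds because $u$ is nonconstant $J$-holomorphic, hence $u^{-1}(x_0)$ is discrete and $D\setminus\partial D$ relatively compact exhausts it).

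First I would fix one such $z_j$ and a small disk $D_j\ni z_j$ with $u(D_j)\subset\iota(B^{2n}(\rho))$. Transporting through the chart $\iota$ and using $\iota^*J=J_0$, the restriction $u|_{D_j}$ becomes a genuine holomorphic map $D_j\to\mathbb{C}^n$ sending $z_j\mapsto 0$, to which the construction in Eq.~(\ref{e:orderlift})--(\ref{e:ulift}) applies verbatim after recentering: write $u|_{D_j}(z)=(z-z_j)^{k}\,(\hat h_1(z),\dots,\hat h_n(z))$ with $k=k_j$ the multiplicity, at least one $\hat h_i(z_j)\neq 0$, and set $\widetilde u_j(z)=(u(z),[\hat h_1(z):\cdots:\hat h_n(z)])$. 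This is holomorphic on $D_j$, lands in $\widetilde{\mathbb{C}^n}\cong\pi^{-1}(\iota(B^{2n}(\rho)))$, and on $D_j\setminus\{z_j\}$ it projects to $u$, hence agrees there with $\widetilde u_{\mathrm{out}}$ by the uniqueness of holomorphic lifts noted in the text (two $\widetilde J$-holomorphic maps agreeing off a discrete set agree everywhere). Thus the local pieces $\widetilde u_j$ glue with $\widetilde u_{\mathrm{out}}$ to a single $\widetilde J$-holomorphic map $\widetilde u\colon D\to\widetilde M$ with $\pi\circ\widetilde u=u$; its boundary still lies in $\widetilde L$ since $\partial D$ is untouched by the surgery. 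Uniqueness of $\widetilde u$ globally is again the discrete-agreement principle.

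For the intersection number, I would compute $\widetilde u\cdot E$ as a sum of local intersection multiplicities of $\widetilde u$ with $E$ at the points of $\widetilde u^{-1}(E)=\{z_1,\dots,z_r\}$. Near $z_j$, in the chart where $u(z)=(z-z_j)^{k_j}(\hat h_1,\dots,\hat h_n)$ with say $\hat h_1(z_j)\neq 0$, the exceptional divisor is $\{w=0\}$ in the $w$-coordinate on $\mathbb{C}^n$, and the first component of $\widetilde u$ in a suitable affine chart of $\widetilde{\mathbb{C}^n}$ is exactly $(z-z_j)^{k_j}\hat h_1(z)$, which vanishes to order $k_j$ at $z_j$ since $\hat h_1(z_j)\neq 0$; hence the local intersection multiplicity is $k_j$. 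Summing, $\widetilde u\cdot E=\sum_j k_j$, which is the multiplicity of $u$ at $x_0$ by definition. (If $x_0\notin u(D)$ the whole surgery is vacuous, $\widetilde u=\pi^{-1}\circ u$ misses $E$, and both sides are $0$.)

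The main obstacle I anticipate is not any single step but making rigorous that the local holomorphic model is legitimately ``holomorphic'' in the honest complex sense: one must use that $\iota^*J=J_0$ so that inside the ball $u$ really is $J_0$-holomorphic and the power-series factorization of Eq.~(\ref{e:orderlift}) is available, and one must invoke that $\widetilde J$ restricted to $\pi^{-1}(\iota(B^{2n}(\rho)))$ is the standard complex structure on $\widetilde{\mathbb{C}^n}$, so that the lift formula (\ref{e:ulift}) produces a $\widetilde J$-holomorphic, not merely smooth, map. Granting this — which is exactly the setup recalled in Section~\ref{s:symplecticblow} and Proposition~\ref{p:propblow} — the gluing and the intersection-number count are routine.
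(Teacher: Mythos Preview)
Your proposal is correct and follows essentially the same approach as the paper. The paper's proof is much terser because the existence and uniqueness of the lift are already established in the discussion immediately preceding the proposition (via Eqs.~(\ref{e:orderlift})--(\ref{e:ulift}) and the uniqueness argument), so the formal proof only records the intersection-number claim $\widetilde u\cdot E=k$; your write-up simply spells out in more detail the local-to-global gluing and the local computation of the intersection multiplicity that the paper leaves implicit.
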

\begin{proof}
It only remains to prove the relation  $\widetilde u \cdot E=k$.
Since the underlying manifold $\mtilde$ agrees with the complex blow up and
the exceptional divisor
$E$ is a $\widetilde J$-holomorphic submanifold of real codimension two of
$(\widetilde M,\widetilde \omega_\rho)$.
Now the multiplicity of $u$ at $x_0$ is $k$, therefore
 $\widetilde u \cdot E=k$.
\end{proof}

Recall from Theorem \ref{t:invariance} the invariance of the blown up point.
Thus, the  holomorphic  disks that will by considered in the framework
of LFH  would have $k=0$, that it the case when the blown up point
is irrelevant for the LFH; or $k=1$ that appears in the  presence of some form
of uniruled condition.

\subsection{Monotone Lagrangians on blow ups}
For a Lagrangian submanifold  $L$ of $(M,\omega)$, there
exist two classical morphisms
$$
I_{\mu,L}:\pi_2(M,L)\to \mathbb{Z}
\ \ \textup{ and } \ \ I_{\omega,L}:\pi_2(M,L)\to \mathbb{R};
$$
the Maslov index and symplectic area morphisms respectively.
A Lagrangian submanifold  is said to be {\em monotone} 
if there exists $\lambda>0$ such that
$
I_{\mu,L}=\lambda\cdot I_{\omega,L}.
$
The constant $\lambda$ is called the monotonicity constant of $L.$
As mentioned in Section
\ref{s:lfh }, in order to define Lagrangian Floer homology one restricts
to monotone Lagrangians. Thus if $L\subset (M,\omega)$
is a monotone Lagrangian submanifold
 we need to guarantee that $\widetilde L$
is a monotone Lagrangian on the one-point blow up
$(\widetilde M,\widetilde \omega_\rho)$.

If $L$ is a monotone Lagrangian submanifold of $(M,\omega)$ with monotonicity constant
$\lambda$, then $(M,\omega)$ is monotone symplectic.  That is
$I_c=(\lambda/2)I_\omega $.
In this case $I_\omega $
is defined on
 $\pi_2(M)$ and the morphism $I_c:\pi_2(M)\to\mathbb{Z}$
is given by evaluating at the first Chern class  of $(M,\omega)$ with respect
to any almost complex structure. Here $\alpha:=\lambda/2$
is the  monotonicity constant of $(M,\omega)$.

The first Chern classes  of $(M,\omega)$  and  the one-point blow
$\mtilde$ are related by the equation
\begin{eqnarray}
\label{e:chernrelation}
c_1(\widetilde M)=\pi^*(c_1(M)) - (n-1)\textup{PD}_{\widetilde M}(E),
\end{eqnarray}
where $E$ is the class of the exceptional divisor. 
Recall that if $L_E$ stands for  the complex line in the exceptional divisor in
$(\widetilde M,\widetilde \omega_\rho)$, then
$\widetilde \omega_\rho(L_E)=\pi \rho^2$ and $\textup{PD}_{\widetilde M}(E)([L_E])=-1.$
Since
the underlying manifold for the symplectic  blow up is independent of the weight it follows 
 from   Eq. (\ref{e:chernrelation}) the symplectic one-point blow
up   $(\widetilde M,\widetilde\omega_\rho)$ is  monotone if and only if
\begin{eqnarray}
 \label{e:monotoneweight}
\rho^2= { \frac{n-1}{\alpha\pi}}= { \frac{2(n-1)}{\lambda\pi}}.
\end{eqnarray}
Furthermore if Eq. (\ref{e:monotoneweight}) holds, then  $(\widetilde M,\widetilde\omega_\rho)$
and $(M,\omega)$ have
the same monotonicity constant.

Throughout the paper we make the following assumptions.
If the condition of monotonicity  on $(M,\omega)$ is required,
we will  assume that the  Gromov's width of $\m$ is greater than
$ {n-1}/{\alpha}$, where $\alpha$ is its monotonicity constant; and  the weight $\rho$
 of the one-point blow up $(\widetilde M,\widetilde\omega_\rho)$  is subject to Eq. (\ref{e:monotoneweight}).

The two  homotopy   long exact sequences of
the pairs $(\widetilde M,\widetilde L)$ and $(M,L)$ are related by the blow up map,
in the sense that the  diagram
\begin{center}
\begin{tabular}{ccccccccccc}
$\to$ &$\pi_2( \widetilde L )$&$ \overset{\widetilde i_*}{\to}$
 & $\pi_2( \widetilde M)$  & $ \overset{\widetilde j_*}{\to}$ &$\pi_2(\widetilde M, \widetilde L )$
  &$ \overset{\widetilde \delta}{\to}$  &$\pi_1( \widetilde L)$  & $\to$    &   $\pi_1(\widetilde M ) $ &$\to$\\
    & $\downarrow =$   &  &$\downarrow {\pi_*}$  &  &$\downarrow{\pi_*}$  &  &$\downarrow =$  &   
     & $\simeq \downarrow{\pi_*}$ &\\
$\to$ &$\pi_2( L )$& $\overset{i_*}{\to}$
  & $\pi_2( M)$  & $\overset{j_*}{\to}$  &$\pi_2(M, L )$  & $ \overset{\delta}{\to}$ &$\pi_1( L)$  & $\to$ 
  &  $ \pi_1( M ) $ &$\to$ \\
\end{tabular}
\end{center}
is commutative.


\begin{lem}
\label{l:pi2}
If $L$ is a Lagrangian submanifold in $\m$
then
the map $\pi_*:\pi_2(\widetilde M, \widetilde L )\to\pi_2( M, L )$ is surjective  and
$$
\textup{ker } \{\pi_*: \pi_2(\widetilde M, \widetilde L )\to\pi_2( M, L )\} = \widetilde j_*
\textup{ker } \{ \pi_*:\pi_2(\widetilde M )\to\pi_2( M )\}.
$$
\end{lem}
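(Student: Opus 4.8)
The plan is to read the lemma off the commutative ladder of homotopy long exact sequences displayed above, so the first step is to pin down its vertical arrows. Because $L\cap\iota(B^{2n}(\rho))=\emptyset$, part (1) of Proposition \ref{p:propblow} gives that $\pi$ restricts to diffeomorphisms $\widetilde L\to L$ and $\widetilde M\setminus E\to M\setminus\{x_0\}$; in particular $\pi_*\colon\pi_2(\widetilde L)\to\pi_2(L)$ and $\pi_*\colon\pi_1(\widetilde L)\to\pi_1(L)$ are isomorphisms, and $\pi_*\colon\pi_1(\widetilde M)\to\pi_1(M)$ is an isomorphism as well (the arrow marked $\simeq$ in the diagram; equivalently by van Kampen, since $\dim M=2n\geq4$). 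Thus three of the four non-relative columns of the ladder are isomorphisms, and the remaining one, $\pi_*\colon\pi_2(\widetilde M)\to\pi_2(M)$, I will only need to be surjective.

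For surjectivity of $\pi_*\colon\pi_2(\widetilde M,\widetilde L)\to\pi_2(M,L)$ I would argue geometrically. Given $u\colon(D,\partial D)\to(M,L)$, the compact set $u(\partial D)\subset L$ does not contain $x_0$, so there is a small ball $V\ni x_0$ with $\overline V\cap u(\partial D)=\emptyset$; then $u^{-1}(V)$ is compactly contained in the interior of $D$, and a $C^0$-small perturbation of $u$ supported there can be taken transverse to the point $x_0$, hence disjoint from it since $2<2n$. The perturbed map is homotopic to $u$ through maps of pairs $(D,\partial D)\to(M,L)$ and, avoiding $x_0$, lifts through the diffeomorphism $M\setminus\{x_0\}\to\widetilde M\setminus E$ to a disk with $\pi_*$-image $[u]$. (The same push-off-and-lift argument for spheres shows $\pi_*\colon\pi_2(\widetilde M)\to\pi_2(M)$ is also onto, which is what a purely diagrammatic proof of surjectivity would require instead.)

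The inclusion $\widetilde j_*\ker\{\pi_*\colon\pi_2(\widetilde M)\to\pi_2(M)\}\subseteq\ker\{\pi_*\colon\pi_2(\widetilde M,\widetilde L)\to\pi_2(M,L)\}$ is immediate from commutativity of the middle square, $\pi_*\circ\widetilde j_*=j_*\circ\pi_*$. For the reverse inclusion, take $\widetilde\beta\in\pi_2(\widetilde M,\widetilde L)$ with $\pi_*\widetilde\beta=0$. Commutativity of the square containing $\widetilde\delta$, together with the isomorphism $\pi_1(\widetilde L)\to\pi_1(L)$, forces $\widetilde\delta(\widetilde\beta)=0$, so by exactness of the top row $\widetilde\beta=\widetilde j_*(\widetilde\alpha)$ for some $\widetilde\alpha\in\pi_2(\widetilde M)$. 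Then $j_*(\pi_*\widetilde\alpha)=\pi_*(\widetilde j_*\widetilde\alpha)=0$, so by exactness of the bottom row $\pi_*\widetilde\alpha=i_*(\gamma)$ with $\gamma\in\pi_2(L)$; using the isomorphism $\pi_2(\widetilde L)\to\pi_2(L)$ lift $\gamma$ to $\widetilde\gamma\in\pi_2(\widetilde L)$ and set $\widetilde\alpha'=\widetilde\alpha-\widetilde i_*(\widetilde\gamma)$. Commutativity of the leftmost square gives $\pi_*\widetilde\alpha'=i_*\gamma-i_*\gamma=0$, while $\widetilde j_*\widetilde\alpha'=\widetilde j_*\widetilde\alpha=\widetilde\beta$ because $\widetilde j_*\circ\widetilde i_*=0$; hence $\widetilde\beta\in\widetilde j_*\ker\{\pi_*\colon\pi_2(\widetilde M)\to\pi_2(M)\}$, as wanted.

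The routine verifications are that $\pi_2$ is abelian (so the correction term $\widetilde\alpha-\widetilde i_*(\widetilde\gamma)$ is legitimate) and that every $\pi_1$-element appearing is only ever asserted to vanish, so the nonabelianness of $\pi_1$ never intervenes. The one point genuinely needing care — the main obstacle — is the passage from $\widetilde\alpha$ to $\widetilde\alpha'$: this is exactly where the columns $\pi_2(\widetilde L)\to\pi_2(L)$ and $\pi_1(\widetilde L)\to\pi_1(L)$ being \emph{isomorphisms}, not merely injective or surjective, is used, and it is the step I would spell out in full. A secondary, purely technical, point is the transversality claim underlying the surjectivity argument, which is standard but is the only non-algebraic input.
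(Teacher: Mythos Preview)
Your proof is correct and essentially the same as the paper's. The kernel computation is identical step for step. For surjectivity the paper instead observes $\widetilde M\simeq M\#\mathbb{C}P^n$ to get $\pi_*\colon\pi_2(\widetilde M)\to\pi_2(M)$ onto and then runs the four-lemma diagram chase you alluded to parenthetically; your direct perturb-off-$x_0$-and-lift argument is an equally valid (and arguably cleaner) substitute.
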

\begin{proof}
As smooth manifolds  $\widetilde M\simeq M\# \mathbb{C}P^n$. Therefore 
$\pi_*: \pi_2(\widetilde M)\to\pi_2(M)$ is surjective. 
Hence four of the vertical maps
in the above diagram are surjective, hence so is the map in the middle
$\pi_*:\pi_2(\widetilde M, \widetilde L )\to\pi_2( M, L )$.


Now we  show that the kernel of
$\pi_*:\pi_2(\widetilde M, \widetilde L )\to\pi_2( M, L )$ is contained in
$\widetilde j_*\textup{ker } \{\pi_*: \pi_2(\widetilde M )\to\pi_2( M )\}$; the reverse inclusion follows by the
commutativity of the diagram.

For, let $u\in \pi_2(\widetilde M, \widetilde L )$ be an element
 that maps to the identity element $e\in \pi_2( M, L )$. Since $\pi_1(L)=\pi_1(\widetilde L)$
 then $\widetilde \delta (u)=e$ and
by exactness of the top sequence
  there
 is $w\in \pi_2(\widetilde M)$ such that $\widetilde j_*(w)=u$.
Note that $\pi_*(\widetilde j_*(w))=e$. Thus by exactness and the fact that
  $\pi_2(L)=\pi_2(\widetilde L)$, there is $w^\prime\in \pi_2(\widetilde L)$ such that
$\pi_*(\widetilde i_*(w^\prime))=\pi_*(w)$.
Therefore  $\widetilde j_*(w-\widetilde i_*(w^\prime))=u$  and
$w-\widetilde i_*(w^\prime)$ maps to $e\in\pi_2(M).$
\end{proof}

If follows from Lemma \ref{l:pi2} that
elements of the kernel of $\pi_*:\pi_2(\widetilde M, \widetilde L )\to\pi_2( M, L )$
are induced by absolute classes. 
Therefore any $[\widetilde u] \in \pi_2(\widetilde M, \widetilde L )$
can be  expressed as $[\widetilde u]=[u_0\#w]$ where $[w]\in
\textup{im }\{  \pi_2(\widetilde M)\to \pi_2(\widetilde M, \widetilde L )\}$
and $u_0:(D,\partial D)\to (\widetilde M, \widetilde L )$ does not intersect
the exceptional divisor. 
\begin{lem}
\label{l:factor}
If $L$ is a  Lagrangian submanifold in $\m$
  and $[u]\in \pi_2(\widetilde M,\widetilde L)$,
then there exist  $\ell\in\mathbb{Z}$
and $u_0:(D,\partial D)\to (\widetilde M, \widetilde L )$
 such that 
$u_0$ does not intersect the exceptional divisor and
$$
[u]=[u_0\# \widetilde j(\ell [L_E])].
$$
\end{lem}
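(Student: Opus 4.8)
The plan is to peel off from $[u]$ the part that meets the exceptional divisor and to recognise what remains as a class that comes from the base, combining Lemma~\ref{l:pi2} with the biholomorphism $\pi\colon\widetilde M\setminus E\to M\setminus\{x_0\}$. The basic tool is the homomorphism $\pi_2(\widetilde M,\widetilde L)\to\mathbb{Z}$, $[v]\mapsto [v]\cdot E$, which is well defined because $E$ is a closed, cooriented, real-codimension-two $\widetilde J$-holomorphic submanifold disjoint from $\widetilde L$; it vanishes on any class that admits a representative disjoint from $E$, and it sends $\widetilde j_*[L_E]$ to $[L_E]\cdot E=-1$.

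Concretely I would argue as follows. First, choose $\ell\in\mathbb{Z}$ with $c:=[u]-\widetilde j(\ell[L_E])$ satisfying $c\cdot E=0$; this is possible (and $\ell$ is then unique) because $[L_E]\cdot E=-1$. Next, produce a disk that misses $E$ and represents $\pi_*[u]$: take a smooth $v\colon(D,\partial D)\to(M,L)$ with $[v]=\pi_*[u]$, and observe that, since $x_0\notin L$ and the domain is two-dimensional while $\{x_0\}$ has codimension $2n\geq 4$, a small perturbation of $v$ that is fixed on $\partial D$ makes $x_0\notin v(D)$ without changing its class; then $u_0:=\pi^{-1}\circ v\colon(D,\partial D)\to(\widetilde M\setminus E,\widetilde L)$ is a disk disjoint from $E$ with $\pi_*[u_0]=\pi_*[u]$. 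Now compare $c$ with $[u_0]$: because $\pi$ collapses $E$, the class $\pi_*[L_E]$ is trivial, so $\pi_*(c)=\pi_*[u]=\pi_*[u_0]$, and by Lemma~\ref{l:pi2} we may write $c-[u_0]=\widetilde j_*(w_0)$ with $w_0\in\ker\{\pi_*\colon\pi_2(\widetilde M)\to\pi_2(M)\}$; moreover $w_0\cdot E=c\cdot E-[u_0]\cdot E=0$. Finally, invoking the identification $\ker\{\pi_*\colon\pi_2(\widetilde M)\to\pi_2(M)\}=\mathbb{Z}[L_E]$ we get $w_0=m[L_E]$ with $-m=w_0\cdot E=0$, hence $w_0=0$ and $c=[u_0]$; therefore $[u]=[u_0]+\widetilde j(\ell[L_E])=[u_0\#\widetilde j(\ell[L_E])]$, which is the assertion.

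The step I expect to carry the weight is the topological identification $\ker\{\pi_*\colon\pi_2(\widetilde M)\to\pi_2(M)\}=\mathbb{Z}[L_E]$. I would derive it homologically: with $\widetilde M=\bigl(M\setminus\iota B^{2n}(r)\bigr)\cup_{S^{2n-1}}L(r)$ and $L(r)$ the total space of the tautological disk bundle deformation retracting onto $E\cong\mathbb{C}P^{n-1}$, the gluing sphere $S^{2n-1}$ is $2$-connected (as $n\geq 2$), so Mayer--Vietoris yields $H_2(\widetilde M;\mathbb{Z})\cong H_2(M;\mathbb{Z})\oplus\mathbb{Z}[L_E]$ with $\pi_*$ the projection onto the first summand; when $M$ (hence $\widetilde M$) is simply connected---the standing situation in which this lemma is used---Hurewicz converts this into the statement for $\pi_2$. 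If $M$ is not simply connected the absolute kernel is genuinely larger (the $\mathbb{Z}[\pi_1]$-submodule generated by $[L_E]$), but since everything used afterwards---the Maslov index, the symplectic area, and hence the Floer and pearl differentials---depends only on the image in $H_2(M,L)$, it is enough to run the whole argument with $H_2$ in place of $\pi_2$, where the Mayer--Vietoris identification holds unconditionally. The remaining ingredients---general position in the construction of $u_0$, well-definedness of $\cdot\,E$, and the commutativity of the two long exact sequences---are routine and already recorded above.
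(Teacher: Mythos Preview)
Your proof is correct and follows essentially the same route as the paper's: construct a disk $u_0$ disjoint from $E$ by lifting a perturbed representative of $\pi_*[u]$, then use Lemma~\ref{l:pi2} to identify the difference $[u]-[u_0]$ with an absolute class in $\ker\pi_*$, which is a multiple of $[L_E]$. Your use of the intersection pairing $\cdot\,E$ to pin down $\ell$ at the outset, and your explicit discussion of the $\pi_2$ versus $H_2$ issue (which the paper glosses over by simply asserting ``as a homology class $[w]$ is a purely exceptional class''), are refinements not present in the paper, but the overall architecture is the same.
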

\begin{proof}
First assume that  $[u]$ is such that $\pi_*[u]=e$. Then by Lemma \ref{l:pi2}
there exists $[w_0]\in \pi_2(\widetilde M)$
such that $[u]=\widetilde j_*[w_0]=[\widetilde j\circ w_0]$. 
Let $[v]:=\pi_*[w_0]$, thus $j_*[v]=e$. Then  by the commutativity of the above diagram
and the fact that $\pi_2(\widetilde L)=\pi_2(L)$, there is $[v_0]\in \pi_2(\widetilde L)$
such that $\pi_*\circ \widetilde i_*[v_0]=[v]$. Therefore $[w_0]-\widetilde i_*[v_0]\in \pi_2(\widetilde M)$
is such that $\widetilde j_*( [w_0]-\widetilde i_*[v_0] )=[u]$ and maps to $e$ under $\pi_*$.
Thus the result holds in this case.

Now for arbitrary $[u]\in\pi_2(\widetilde M,\widetilde L)$, let
$[u_0]:=\pi_*[u]\in\pi_2( M, L)$. Since $x_0$ is not in $L$, there exist a continuous map
$u_0^\prime: (D,\partial D)\to (M,L)$ such that $x_0 $ is not in $u_0^\prime(D)$
and $[u_0]=[u_0^\prime]$. In particular, the map $u_0^\prime$ lifts to a map 
$(\widetilde M,\widetilde L)$  that does not intersect the exceptional divisor. Let
$\widetilde u_0$ be such a map, thus $[\widetilde u_0]\in\pi_2(\widetilde M,\widetilde L)$ and 
$[\pi\circ\widetilde u_0]=[u_0^\prime]$.
Notice that $[u]-[\widetilde u_0]$ maps to $e$ under $\pi_*$. Hence
there exists $[w]\in \pi_2(\widetilde M)$
such that $\pi_*[w]=e$ and
$$
[u]=[\widetilde u_0]+\widetilde j_*[w]=[\widetilde u_0\# (\widetilde j\circ w)].
$$
Further since  $\pi_*[w]=e$ then as a homology class
$[w]$ is a purely exceptional class. This means that $[w]=\ell [L_E]$
for some $\ell\in\mathbb{Z}$. Therefore $[u]=[u_0\# \widetilde j(\ell [L_E])].$
\end{proof}

With these results is now possible to show that the lift to the one-point blow up
of a monotone Lagrangian submanifold is also monotone.

\begin{lem}
\label{l:maslovindexrelation}
Let $L$ be a Lagrangian submanifold in $\m$.
If $\widetilde u:(D, \partial D)\to (\widetilde M, \widetilde L)$
is  a smooth map,
then
$$
\mu_{\widetilde L}[\widetilde u] =\mu_{L}[\pi\circ\widetilde u]+2(n-1)\ell
$$
for some $\ell\in\mathbb{Z}.$
\end{lem}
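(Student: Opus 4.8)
The plan is to reduce the statement to the two structural results established just above, namely Lemma \ref{l:factor} and the Chern class relation \eqref{e:chernrelation}, together with the elementary behaviour of the Maslov index under gluing. First I would invoke Lemma \ref{l:factor} to write
\[
[\widetilde u] = [u_0 \,\#\, \widetilde j(\ell[L_E])]
\]
for some integer $\ell$ and some disk $u_0 \colon (D,\partial D) \to (\widetilde M,\widetilde L)$ whose image is disjoint from the exceptional divisor $E$. The Maslov index is additive under this gluing operation (the relative class decomposes as a relative class plus an absolute class, and the Maslov index of a capped-off absolute sphere class $B$ is $2c_1(\widetilde M)(B)$), so
\[
\mu_{\widetilde L}[\widetilde u] = \mu_{\widetilde L}[u_0] + 2\,c_1(\widetilde M)(\ell[L_E]) = \mu_{\widetilde L}[u_0] - 2(n-1)\ell,
\]
where in the last step I use that $c_1(\widetilde M)(L_E) = c_1(M)(\pi_*[L_E]) - (n-1)\,\mathrm{PD}_{\widetilde M}(E)([L_E]) = 0 - (n-1)(-1) = n-1$ from \eqref{e:chernrelation} and the fact that $\pi_*[L_E] = 0$. (I would absorb the sign into $\ell$ at the end since the statement only claims \emph{some} $\ell \in \mathbb{Z}$.)

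The second step is to show $\mu_{\widetilde L}[u_0] = \mu_L[\pi\circ u_0]$, i.e. that the Maslov index is unchanged for a disk avoiding $E$. This is where the hypothesis that the blow-up map restricts to a diffeomorphism $\widetilde M \setminus E \to M \setminus \{x_0\}$ (Proposition \ref{p:propblow}(1)) does the work: since $u_0(D) \subset \widetilde M \setminus E$ and $\widetilde L = \pi^{-1}(L)$, the map $\pi$ carries the bundle pair $(u_0^*T\widetilde M, (u_0|_{\partial D})^*T\widetilde L)$ isomorphically to $((\pi\circ u_0)^*TM, ((\pi\circ u_0)|_{\partial D})^*TL)$, and the Maslov index of a bundle pair over the disk depends only on the isomorphism class of that pair. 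Finally I would check that $\mu_L[\pi\circ u_0] = \mu_L[\pi\circ\widetilde u]$: by construction $\pi_*[u_0] = \pi_*[\widetilde u]$ in $\pi_2(M,L)$ because $\pi_*\widetilde j(\ell[L_E]) = \ell\,\pi_*[L_E] = 0$ (the exceptional line collapses to the point $x_0 \notin L$), and the Maslov index morphism $I_{\mu,L}$ is a homotopy invariant. Chaining the three equalities gives $\mu_{\widetilde L}[\widetilde u] = \mu_L[\pi\circ\widetilde u] - 2(n-1)\ell$, and replacing $\ell$ by $-\ell$ yields the asserted formula.

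I expect the only genuinely delicate point to be the bookkeeping in the first step: one must be careful that the Maslov index of a \emph{relative} class glued with an absolute class is genuinely additive and that the contribution of the absolute class is $2c_1$ evaluated on it — this is standard but worth stating cleanly, perhaps citing the decomposition $\mu_{L}(\alpha \# B) = \mu_L(\alpha) + 2c_1(B)$ for $\alpha \in \pi_2(M,L)$ and $B \in \pi_2(M)$. Everything else is formal consequence of Lemma \ref{l:factor}, the diffeomorphism property of $\pi$ away from $E$, and \eqref{e:chernrelation}. Note that no monotonicity of $L$ is used in this lemma; monotonicity of $\widetilde L$ will be deduced afterwards by combining this Maslov formula with the analogous computation of symplectic areas (where the correction term is $2(n-1)\ell \cdot (\text{something})$ forced to match by \eqref{e:monotoneweight}).
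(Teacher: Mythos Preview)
Your proposal is correct and follows the same route as the paper: decompose $[\widetilde u]$ via Lemma~\ref{l:factor}, use additivity $\mu_{\widetilde L}(\alpha\#B)=\mu_{\widetilde L}(\alpha)+2c_1(\widetilde M)(B)$, compute $c_1(\widetilde M)(L_E)=n-1$ from \eqref{e:chernrelation}, and identify $\mu_{\widetilde L}[u_0]=\mu_L[\pi\circ u_0]$ because $\pi$ is a symplectic diffeomorphism away from $E$. (The paper does not give a stand-alone proof of this lemma; the computation is folded into the proof of Lemma~\ref{l:monotonelag}.)

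One small arithmetic slip: since $c_1(\widetilde M)(L_E)=n-1$, the gluing step gives $\mu_{\widetilde L}[\widetilde u]=\mu_{\widetilde L}[u_0]+2(n-1)\ell$, not $-2(n-1)\ell$; your final sign-absorption step is therefore unnecessary, and the formula comes out directly as stated.
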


Next, we see that the condition of monotonicity of a Lagrangian submanifold is preserved
by the proper transform.

\begin{lem}
\label{l:monotonelag}
Let $L$ be a  Lagrangian submanifold in $\m$,
 then
  $\widetilde L$ is monotone Lagrangian submanifold of $(\widetilde M,\widetilde \omega_\rho)$
with the same monotonicity constant as $L$.
\end{lem}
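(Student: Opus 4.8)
The plan is to deduce monotonicity of $\widetilde L$ from monotonicity of $L$ by comparing the two morphisms $I_{\mu,\widetilde L}$ and $I_{\omega,\widetilde L}$ on $\pi_2(\widetilde M,\widetilde L)$ against their counterparts downstairs, using the factorisation supplied by Lemma \ref{l:factor} together with the Maslov-index identity of Lemma \ref{l:maslovindexrelation}. First I would fix an arbitrary class $[u]\in\pi_2(\widetilde M,\widetilde L)$ and invoke Lemma \ref{l:factor} to write $[u]=[u_0\#\widetilde j(\ell[L_E])]$ where $u_0$ avoids the exceptional divisor $E$ and $\ell\in\mathbb{Z}$. On such a decomposition the symplectic area splits additively: since $\pi$ is a symplectic diffeomorphism away from $E$ and $\pi^*\omega=\widetilde\omega_\rho$ off the embedded ball (Proposition \ref{p:propblow}(2)), one has $I_{\omega,\widetilde L}[u_0]=I_{\omega,L}[\pi\circ u_0]=I_{\omega,L}[\pi\circ u]$, while $\widetilde\omega_\rho(\widetilde j(\ell[L_E]))=\ell\pi\rho^2$; hence
\begin{eqnarray*}
I_{\omega,\widetilde L}[u]=I_{\omega,L}[\pi\circ u]+\ell\pi\rho^2.
\end{eqnarray*}

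Next I would bring in the Maslov side. Lemma \ref{l:maslovindexrelation} gives $\mu_{\widetilde L}[u]=\mu_L[\pi\circ u]+2(n-1)\ell$ for the same integer $\ell$ (one should check that the $\ell$ appearing there can be taken to be the one from the chosen factorisation, which follows because both are computed from the intersection number $[u]\cdot E$). Now use that $L$ is monotone with constant $\lambda$, so $\mu_L[\pi\circ u]=\lambda\, I_{\omega,L}[\pi\circ u]$, and that the monotone-weight relation (\ref{e:monotoneweight}) forces $\rho^2=2(n-1)/(\lambda\pi)$, equivalently $2(n-1)=\lambda\pi\rho^2$. Substituting,
\begin{eqnarray*}
\mu_{\widetilde L}[u]=\lambda\, I_{\omega,L}[\pi\circ u]+\lambda\pi\rho^2\,\ell
=\lambda\bigl(I_{\omega,L}[\pi\circ u]+\ell\pi\rho^2\bigr)=\lambda\, I_{\omega,\widetilde L}[u].
\end{eqnarray*}
Since $[u]$ was arbitrary, $I_{\mu,\widetilde L}=\lambda\cdot I_{\omega,\widetilde L}$, i.e.\ $\widetilde L$ is monotone with the same constant $\lambda$ as $L$, and $\lambda>0$ is inherited directly.

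The main obstacle I anticipate is bookkeeping consistency of the integer $\ell$: Lemma \ref{l:factor}, Lemma \ref{l:maslovindexrelation}, and the area computation each produce an integer "multiplicity", and the argument only closes if these agree (or at least are related linearly in a compatible way). I would handle this by noting that all three are governed by the homological intersection number $\widetilde u\cdot E$ — cf.\ Proposition \ref{p:lift} — so fixing the factorisation of $[u]$ through Lemma \ref{l:factor} pins down a single $\ell$ that simultaneously controls the area defect $\ell\pi\rho^2$ and the Maslov defect $2(n-1)\ell$. A minor secondary point is that $\mu_{\widetilde L}$, $I_{\omega,\widetilde L}$ are genuinely well-defined on $\pi_2(\widetilde M,\widetilde L)$, which is immediate once monotonicity is in hand but should be remarked; likewise one should note $\widetilde L$ is indeed a Lagrangian submanifold, which is exactly the content recalled at the start of Section \ref{s:Laganddisk} under the standing hypothesis $L\cap\iota B^{2n}(\rho)=\emptyset$.
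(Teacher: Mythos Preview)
Your proposal is correct and follows essentially the same route as the paper: factor an arbitrary class via Lemma~\ref{l:factor}, compare Maslov index and symplectic area term-by-term on the $u_0$ and $\ell[L_E]$ pieces, and close using the monotone-weight relation~(\ref{e:monotoneweight}). The only organizational difference is that you invoke Lemma~\ref{l:maslovindexrelation} as a black box for the Maslov defect $2(n-1)\ell$, whereas the paper re-derives this inside the proof via the Chern identity $c_1(\widetilde M)[L_E]=n-1$ and $\mu_{\widetilde L}[u_0\#\widetilde j(\ell L_E)]=\mu_{\widetilde L}[u_0]+2\ell\,c_1(\widetilde M)[L_E]$; your caution about matching the two $\ell$'s is well-placed and exactly why the paper works directly with the factorisation rather than citing the lemma.
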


\begin{proof}
Let $\lambda$ and $\alpha:=\lambda/2$
 be the monotonicity constants  of $L\subset(M,\omega)$ and $(M,\omega)$
respectively. Recall that the value of $\rho =\sqrt{{ \frac{n-1}{\alpha\pi}}}$ is such that $(\widetilde M,\widetilde \omega_\rho)$
is monotone with monotonicity constant $\alpha.$

For  $[\widetilde u]$ in $\pi_2(\widetilde M,\widetilde L)$, by Lemma \ref{l:factor} we have that
 $[\widetilde u]=[u_0  \#\widetilde j (\ell[L_E])]$ where
$u_0: (D,\partial D)\to (\widetilde M,\widetilde L)$ does not intersect
the exceptional divisor and $\ell\in \mathbb{Z}$. 
Therefore
$$
I_{\widetilde\mu,\widetilde L}([\widetilde u])= I_{\widetilde\mu,\widetilde L}([u_0])+ 2c_1(\widetilde M)(\ell[L_E]).
$$
Since $u_0$ does not intersects the exceptional divisor, we can assume that   its image lies 
in $\widetilde M\setminus \pi^{-1}(\iota B^{2n}(\rho))$. By Proposition 
\ref{p:propblow}, 
$(\widetilde M\setminus \pi^{-1}(\iota B^{2n}(\rho)),\widetilde\omega_\rho)$
is symplectomorphic to 
$( M\setminus\iota B^{2n}(\rho),\omega)$ under the blow up map. 
The Maslov index is invariant under symplectic diffeomorphisms, thus
$
I_{\widetilde\mu,\widetilde L}([u_0])= I_{\mu,L}([\pi\circ u_0])
$
and
 $$
I_{\widetilde\mu,\widetilde L}([\widetilde u])=  I_{\mu,L}([\pi\circ u_0])+ 2\ell c_1(\widetilde M)([L_E]).
$$

Recall that $\widetilde\omega_\rho([L_E])=\pi \rho^2$ and
\begin{eqnarray*}
c_1(\widetilde M)[L_E]&=& \pi^*(c_1(M))[L_E] - (n-1)\textup{PD}_{\widetilde M}(E)[L_E]
=n-1.
\end{eqnarray*}
Thus
\begin{eqnarray*}
 I_{\mu,L}([\pi\circ  u_0 ])+ 2\ell c_1(\widetilde M)([L_E]) &= &
I_{\mu,L}([\pi\circ  u_0])+2(n-1)\ell \\
 &= &\lambda I_{\omega}([\pi\circ u_0])+  2(n-1)\ell \\
&= &\lambda I_{\omega}([\pi\circ u_0])+ (2\alpha) \pi \rho^2 \ell \\
&= &\lambda \omega([\pi\circ  u_0])+ \lambda   I_{\widetilde\omega_\rho}([w]) =
\lambda I_{\widetilde\omega_\rho}([\widetilde u]).
\end{eqnarray*}
That is, $  I_{\widetilde\mu,\widetilde L}([\widetilde u])   =\lambda I_{\widetilde\omega_\rho}([\widetilde u])$
for all $[\widetilde u]\in \pi_2(\widetilde M,\widetilde L)$
and $\widetilde L$ is a monotone Lagrangian submanifold. 
\end{proof}


Notice  from the above proof, that  the  Maslov index of
$\widetilde u:(D, \partial D)\to (\widetilde M, \widetilde L)$ can be written in terms of
the Maslov index of
$\pi\circ \widetilde u$.
In particular, in the case of a $J$-holomorphic disk, we have a precise description of
the integer $\ell$ that appears in the above formula.

\begin{prop}
\label{p:maslovindx}
Let $(M,\omega)$ and  $L$ as in Lemma \ref{l:monotonelag} and $J$ a $\omega$-compatible almost
complex structure on $(M,\omega)$. If $\widetilde u: (D,\partial D)\to (\widetilde M,\widetilde L)$  is
 $\widetilde J$-holomorphic  and
$[\widetilde u]\cdot E=\ell\geq 0$, then
$$
\mu_{\widetilde L}[\widetilde u] =\mu_{L}[\pi\circ\widetilde u] -2(n-1)\ell.
$$
\end{prop}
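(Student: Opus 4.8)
The statement to prove is the refinement of Lemma~\ref{l:maslovindexrelation} in the case of a $\widetilde J$-holomorphic disk: namely that the integer $\ell$ appearing there is precisely $-([\widetilde u]\cdot E)$. The plan is to combine the abstract factorisation from Lemma~\ref{l:factor} with the positivity of intersection coming from $\widetilde J$-holomorphicity and the explicit Chern class computation already carried out in the proof of Lemma~\ref{l:monotonelag}.

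First I would invoke Lemma~\ref{l:factor} to write $[\widetilde u] = [u_0 \# \widetilde j(m[L_E])]$ for some $m\in\mathbb{Z}$, where $u_0$ is a disk with boundary on $\widetilde L$ whose image is disjoint from the exceptional divisor $E$. Then I would compute the algebraic intersection number with $E$ in two ways. On one hand, $[\widetilde u]\cdot E = \ell$ by hypothesis. On the other hand, since $u_0$ avoids $E$ we have $[u_0]\cdot E = 0$, while $\widetilde j(m[L_E])\cdot E = m(L_E\cdot E) = -m$, using that $\mathrm{PD}_{\widetilde M}(E)([L_E]) = -1$ as recalled before Eq.~(\ref{e:chernrelation}). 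Hence $\ell = -m$, i.e.\ $m = -\ell$. Here is where $\widetilde J$-holomorphicity enters: for a genuine $\widetilde J$-holomorphic disk, Proposition~\ref{p:lift} (or rather the positivity of intersections of the holomorphic curve with the $\widetilde J$-holomorphic submanifold $E$) guarantees $\ell = [\widetilde u]\cdot E \geq 0$, which matches the hypothesis and pins down the sign.

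Next I would substitute this into the Maslov index formula. From the computation in Lemma~\ref{l:monotonelag}, $\mu_{\widetilde L}[\widetilde u] = \mu_{\widetilde L}[u_0] + 2 c_1(\widetilde M)(m[L_E])$, and since $u_0$ lies (up to homotopy) in $\widetilde M\setminus\pi^{-1}(\iota B^{2n}(\rho))$ where $\pi$ is a symplectomorphism, $\mu_{\widetilde L}[u_0] = \mu_L[\pi\circ u_0] = \mu_L[\pi\circ\widetilde u]$ (the second equality because $\widetilde j(m[L_E])$ is an absolute class which does not change the image of the relative class under $\pi_*$). Combining with $c_1(\widetilde M)([L_E]) = n-1$ — already established via Eq.~(\ref{e:chernrelation}) and $\pi^*(c_1(M))([L_E])=0$ — I get
\[
\mu_{\widetilde L}[\widetilde u] = \mu_L[\pi\circ\widetilde u] + 2(n-1)m = \mu_L[\pi\circ\widetilde u] - 2(n-1)\ell,
\]
which is the desired identity.

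The main obstacle is not any single computation — all the pieces are assembled above — but rather checking carefully that the integer $m$ produced abstractly by Lemma~\ref{l:factor} really is the geometric intersection number $[\widetilde u]\cdot E$, and in particular that the factorisation $[\widetilde u]=[u_0\#\widetilde j(m[L_E])]$ is compatible with intersection-theoretic bookkeeping for a $\widetilde J$-holomorphic (as opposed to merely smooth) disk. The key point making this work is that $E$ is a $\widetilde J$-holomorphic submanifold of real codimension two, so intersection numbers with $E$ are homotopy invariant and, for holomorphic $\widetilde u$, count points with positive multiplicity; Proposition~\ref{p:lift} already identifies that total multiplicity with $k=[\widetilde u]\cdot E$. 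So the proof is essentially: identify $m=-\ell$ via intersection with $E$, then plug into the formula from Lemma~\ref{l:monotonelag}.
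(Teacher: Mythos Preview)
Your proposal is correct and matches the paper's intended argument: the paper does not give a separate proof of this proposition but indicates it follows from the computation in the proof of Lemma~\ref{l:monotonelag} together with the identification of the integer via the intersection with $E$ (Proposition~\ref{p:lift}). You have simply made explicit the step $[\widetilde u]\cdot E = [u_0]\cdot E + m(L_E\cdot E) = -m$, which is exactly the ``precise description of the integer $\ell$'' the paper alludes to.
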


Hence the holomorphic disks $\widetilde u$ and $\pi\circ\widetilde u$ have the same
Maslov index if and only if  $\widetilde u$ does not intersect the exceptional divisor,  
or equivalently $\pi\circ\widetilde u$ does not contain the base point $x_0$.
 
Finally recall that  the minimal Maslov number $N_L$  of a Lagrangian submanifold $L$ in $\m$
 is defined as the the positive generator of the image of 
$I_{\mu, L}:\pi_2(M,L)\to \mathbb{Z}$. Then under the considerations of 
Lemma \ref{l:maslovindexrelation},
 $
N_{\widetilde L}\leq
N_L .
$

\begin{rema}
The statements presented in this section regarding the Maslov index $\mu_L$
of a Lagrangian submanifold $L$, also apply to the relative Maslov index
$\mu_{L_0,L_1}$ of the pair of Lagrangian submanifolds $L_0$ and $L_1$.
\end{rema}

\section{Lagrangian Floer homology on the blow up}

Let $\m$ be a symplectic manifold that is
  either closed
or convex at infinity,
and $L_0$ and $L_1$   closed Lagrangian submanifolds 
 that intersect
transversely and $N_{L_j}\geq 3$ for $j=0,1$. 
For the moment, the Lagrangian submanifolds do not have to be monotone.
As above, we assume that $L_0$ and $L_1$   do not intersect the image
of the embedded ball.
Finally we also assume that there
exists a $\omega$-compatible  almost complex structure $J$ in $\mathcal{J}_\textup{reg}(L_0,L_1)$
such that $\iota^*J=J_0$.

\begin{prop}
\label{p:regular}
Let $\m, L_0,L_1$ and $\iota:(B^{2n}(\rho),\omega_0)\to \m$ as above.
If  $J$ is a regular $\omega$-compatible almost complex structure  for  $(L_0,L_1)$ and
$\widetilde J$ the unique $\widetilde\omega_\rho$-compatible almost complex structure on $\mtilde$ such
that $\pi$ is $(\widetilde J, J)$-holomorphic, then $\widetilde J$
is regular for $(\widetilde L_0,\widetilde L_1)$.
\end{prop}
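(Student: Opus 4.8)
The goal is to show that $D_{\overline\partial(\widetilde J),\widetilde u}$ is surjective for every $\widetilde J$-holomorphic strip $\widetilde u$ with $\widetilde u(s,0)\in\widetilde L_0$ and $\widetilde u(s,1)\in\widetilde L_1$. Since $L_0\cup L_1$ is disjoint from $\iota(B^{2n}(\rho))$, the boundary of any such $\widetilde u$ avoids the exceptional divisor $E$, so $\widetilde u^{-1}(E)$ is a finite set of interior points; I would split the argument according to whether $\widetilde u\cdot E$ vanishes.

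\emph{Case $\widetilde u\cdot E=0$.} By Proposition \ref{p:propblow} the blow-up map restricts to a biholomorphism $\pi\colon\widetilde M\setminus E\to M\setminus\{x_0\}$ that carries $\widetilde L_j$ onto $L_j$ and intertwines $\widetilde J$ with $J$. Thus $u:=\pi\circ\widetilde u$ is a $J$-holomorphic strip on $(M,L_0\cup L_1)$, the bundle isomorphism $d\pi\colon\widetilde u^{*}T\widetilde M\to u^{*}TM$ sends the boundary conditions for $\widetilde u$ to those for $u$, and it conjugates $D_{\overline\partial(\widetilde J),\widetilde u}$ to $D_{\overline\partial(J),u}$. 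Since $J\in\mathcal{J}_{\textup{reg}}(L_0,L_1)$ the latter is onto, hence so is the former.

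\emph{Case $\ell:=\widetilde u\cdot E>0$.} This is the substantive part. Here $u:=\pi\circ\widetilde u$ is still a $J$-holomorphic strip on $(M,L_0\cup L_1)$, but it passes through $x_0$ with total multiplicity $\ell$, and by Proposition \ref{p:maslovindx} (together with the Remark following it, which extends it to the relative index) one has $\mu_{\widetilde L_0,\widetilde L_1}[\widetilde u]=\mu_{L_0,L_1}[u]-2(n-1)\ell$. Near each point of $u^{-1}(x_0)$ the hypothesis $\iota^{*}J=J_0$ puts everything in the holomorphic normal form of Equations (\ref{e:orderlift})--(\ref{e:ulift}), in which $\widetilde u$ is the canonical resolution of $u$ of Proposition \ref{p:lift}. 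The pulled-back differential $\widetilde u^{*}(d\pi)\colon\widetilde u^{*}T\widetilde M\to u^{*}TM$ is an isomorphism away from $\widetilde u^{-1}(E)$, is injective as a map of sheaves, and has torsion cokernel $\mathcal{T}$ supported on $\widetilde u^{-1}(E)$; a local computation in the normal form, using that the normal bundle of $E$ is $\mathcal{O}(-1)$ and $\dim_{\mathbb{C}}E=n-1$, shows $\mathcal{T}$ has length $(n-1)\ell$, in agreement with the index drop. The plan is then to feed the short exact sequence $0\to\widetilde u^{*}T\widetilde M\to u^{*}TM\to\mathcal{T}\to0$ into the corresponding Cauchy--Riemann operators with the totally real boundary conditions and conclude that $\textup{coker}\,D_{\overline\partial(\widetilde J),\widetilde u}$ injects into $\textup{coker}\,D_{\overline\partial(J),u}=0$: an obstruction class for $\widetilde u$ transports, via the biholomorphism of the previous case, to a solution of the formal adjoint equation for $u$ on the punctured domain, which --- having only the mild, explicitly bounded singularity dictated by the normal form --- extends across the punctures by elliptic regularity, and this extension is injective by unique continuation. (This is modelled on the standard treatment of regularity on blow-ups; cf. \cite{msjholo}.)

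The step I expect to be the main obstacle is exactly the last one in the second case: making the comparison of the Sobolev-completed operators rigorous --- choosing weights adapted to the rate at which $(d\pi)^{-1}$ blows up along $E$, proving the removable-singularity estimate for the adjoint solutions, and verifying that the torsion sheaf $\mathcal{T}$ contributes nothing to the cokernel. All of this uses $\iota^{*}J=J_0$ in an essential way, since it is integrability near $x_0$ that makes the local picture around $E$ honestly holomorphic and the pertinent local obstruction groups vanish; without it the conclusion fails, consistently with the Remark after Theorem \ref{t:invariance}.
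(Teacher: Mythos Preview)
Your Case~1 is the paper's argument verbatim. For the general case, however, the paper neither separates out $\widetilde u\cdot E>0$ nor invokes any sheaf-theoretic exact sequence or adjoint analysis. It argues uniformly: for an arbitrary $\widetilde J$-strip $\widetilde u$ it defines push-forward maps
\[
\pi_{\widetilde u}^{W}\colon W_k^p(\widetilde u^{*}T\widetilde M;\widetilde L_0,\widetilde L_1)\longrightarrow W_k^p((\pi\circ\widetilde u)^{*}TM;L_0,L_1),
\]
\[
\pi_{\widetilde u}^{L}\colon L_p(\widetilde u^{*}T\widetilde M)\longrightarrow L_p((\pi\circ\widetilde u)^{*}TM),
\]
by $\xi\mapsto\pi_{*}\xi$ off the finite set $\widetilde u^{-1}(E)$ (and $0$ there), asserts both are surjective, records the commutation $\pi_{\widetilde u}^{L}\circ D_{\overline\partial(\widetilde J),\widetilde u}=D_{\overline\partial(J),\pi\circ\widetilde u}\circ\pi_{\widetilde u}^{W}$, and reads off surjectivity of $D_{\overline\partial(\widetilde J),\widetilde u}$ from that of the right-hand side. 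This is considerably shorter than your route via $0\to\widetilde u^{*}T\widetilde M\to u^{*}TM\to\mathcal T\to0$ and a removable-singularity argument for the adjoint.

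What each approach buys: the paper's diagram chase sidesteps the weighted-Sobolev and extension analysis you flag as the main obstacle, at the cost of leaving the claimed surjectivity of $\pi_{\widetilde u}^{L}$ (and the implicit injectivity needed to finish the deduction) unexamined --- and that is precisely where the blow-up rate of $(d\pi)^{-1}$ you worry about would enter, since the pointwise preimage $(\pi_{*})^{-1}\eta$ need not remain in $L^{p}$ near $\widetilde u^{-1}(E)$. Your cokernel comparison confronts this head-on and makes the index defect $2(n-1)\ell$ visible as the length of $\mathcal T$; the paper's argument is faster but suppresses exactly the analytic point you correctly isolate as delicate. In short, your plan is sound and more careful about the hard step, while the paper opts for a one-diagram shortcut.
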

\begin{proof}
Let  $\widetilde u:
(D,\partial D)\to (\widetilde M,\widetilde L_0\cup \widetilde L_1)$
be a $\widetilde J$-holomorphic disk that joints the intersection points
 $\widetilde  p$ and $\widetilde q$. Since the  blow up map is holomorphic,
$\pi\circ \widetilde u$
is a $J$-holomorphic disk that joints the intersection points
$p=\pi(\widetilde  p)$ and $q=\pi(\widetilde q)$ and its boundary lies in $L_0\cup L_1$,
$\pi\circ \widetilde u(\cdot,j)\in L_j$ for $j=1,2$.
Since $J$ is regular for $(L_0,L_1)$,
 then the  operator
$$
D_{\overline\partial(J),\pi\circ \widetilde u}: W_k^p((\pi\circ \widetilde u)^*TM;L_0,L_1)
\to L_p((\pi\circ \widetilde u)^*TM)
$$
is surjective.

The blow up map induces an operator between the spaces of sections 
$L_p(\widetilde u^*T\widetilde M)$ and
$L_p((\pi\circ \widetilde u)^*TM)$ as follows. In the case when
$\widetilde u$ does not intersect the exceptional divisor, the map
$$	
\pi_{\widetilde u}^L: L_p(\widetilde u^*T\widetilde M)\to L_p((\pi\circ \widetilde u)^*TM)
$$
is defined  as $\pi_{\widetilde u}^L(\xi):=\pi_*(\xi)$. Note that it 
  is well defined and surjective.
Now in the case when $\widetilde u(D)\cap E$ is not empty, then since
$\widetilde u$ is holomorphic we have that $\widetilde u^{-1}(E)$
is a finite set in $D$. So in this case $\pi_{\widetilde u}^L(\xi)$ is defined in the
same way as in the previous case on $D\setminus  \widetilde u^{-1}(E)$ and equal to zero
on $  \widetilde u^{-1}(E)$.  Also in this
case $\pi_{\widetilde u}^L$ is well defined and surjective.  That is, for every
${\widetilde u}$
holomorphic disk the map $\pi_{\widetilde u}^L$ is surjective.
The same reasoning shows that the map
$$
\pi_{\widetilde u}^W:
W_p^k(\widetilde u^*T\widetilde M;\widetilde L_0,\widetilde L_1)\to
W_p^k((\pi\circ \widetilde u)^*TM;L_0,L_1)
$$
defined as $\pi_{\widetilde u}^W(\xi)=\pi_*(\xi)$ on $D\setminus  \widetilde u^{-1}(E)$
and zero on $  \widetilde u^{-1}(E)$ is well defined and surjective.

Notice that we have a commutative relation
$$
\pi_{\widetilde u}^L\circ D_{\overline\partial(\widetilde J),\widetilde u}
=D_{\overline\partial(J),\pi\circ \widetilde u}\circ \pi_{\widetilde u}^W.
$$
Since $D_{\overline\partial(J),\pi\circ \widetilde u}$ is surjective, 
then $D_{\overline\partial(\widetilde J),\widetilde u}$ is surjective and $\widetilde J$
is regular for $(\widetilde L_0,\widetilde L_1)$.
\end{proof}

A similar argument applies to the case of regularity of the almost complex
structure in the case of the pearl complex.

For
 $\widetilde  p$ and $\widetilde q$ in $\widetilde L_0\cap\widetilde L_1$,
 $J $ a regular $\omega$-compatible almost complex structure on $\m$ and $\beta\in \pi_2(\widetilde M,
\widetilde L_0\cup\widetilde L_1)$ there is a smooth map
$$
\mathcal{M}\pi:\mathcal{M}(\widetilde p,\widetilde q,  \beta,\widetilde  J) \to
\mathcal{M}( p,q,  \pi_*(\beta), J)
$$
induced by the blow up map.  This map is not  necessarily surjective.
For, suppose that $u$ a $J$-holomorphic disk such that $[u]=\pi_*(\beta)$,
$\pi_*(\beta)\in\pi_2(M,L_0\cup L_1)$ is non trivial 
and $u(z_0)=x_0$ for some $z_0\in \textup{Int}(D)$. Then by
Propositions \ref{p:lift} and \ref{p:maslovindx},
$u$ has the unique holomorphic  lift $\widetilde u$ is such that
$\mu_{\widetilde L_0,\widetilde L_1}(  \widetilde u)<\mu_{ L_0,L_1}(u)$.
Hence if the class $\beta$ does not have an  exceptional part, we get that  the lift $\widetilde u$
does not lie in $\mathcal{M}(\widetilde p,\widetilde q,  \beta,\widetilde  J)$.

However if we ignore the homotopy class and consider the whole moduli space, then by
Propositions \ref{p:lift}  and \ref{p:regular}, the map $\mathcal{M}\pi$ is surjective.
\begin{prop}
Let $L_1$and $L_2$ as in Proposition \ref{p:regular}. 
The map
 $$
\mathcal{M}\pi:\mathcal{M}(\widetilde p,\widetilde q, \widetilde  J) \to
\mathcal{M}( p,q,  J)
$$
given by $\mathcal{M}\pi(\widetilde u)=\pi\circ \widetilde u$
is surjective.
\end{prop}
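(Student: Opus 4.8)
The plan is to recognize that this proposition is just the lifting result of Proposition~\ref{p:lift}, transported to the setting of strips with boundary on the pair $(L_0,L_1)$: every $J$-holomorphic strip downstairs carries a canonical $\widetilde J$-holomorphic lift upstairs which maps to it under $\mathcal{M}\pi$. So to prove surjectivity I would fix $u\in\mathcal{M}(p,q,J)$ --- a $J$-holomorphic strip $u\colon\mathbb{R}\times[0,1]\to M$ with $u(s,0)\in L_0$, $u(s,1)\in L_1$ and $\lim_{s\to-\infty}u=q$, $\lim_{s\to+\infty}u=p$ --- and produce $\widetilde u\in\mathcal{M}(\widetilde p,\widetilde q,\widetilde J)$ with $\pi\circ\widetilde u=u$. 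Using the conformal identification of $\mathbb{R}\times[0,1]\sqrt{-1}$ with the closed disk minus two boundary points, $u$ becomes a $J$-holomorphic map $(D,\partial D)\to(M,L_0\cup L_1)$ with two punctures on $\partial D$; since $L_0\cup L_1$ is disjoint from $\iota(B^{2n}(\rho))$ we have $x_0\notin u(\partial D)$, and as $u$ is non-constant its interior preimage $u^{-1}(x_0)$ is a finite subset of $\textup{int}(D)$.

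First I would construct the lift. Over $D\setminus u^{-1}(x_0)$ the blow-up map $\pi$ is a biholomorphism onto $M\setminus\{x_0\}$, so $\widetilde u:=\pi^{-1}\circ u$ is a well-defined $\widetilde J$-holomorphic map there. Near each point of $u^{-1}(x_0)$ I would invoke the hypothesis $\iota^*J=J_0$: in the symplectic chart $\iota$ the question reduces to lifting a non-constant holomorphic map $D\to\mathbb{C}^n$ through the standard complex blow-up of $\mathbb{C}^n$ at the origin, which is precisely the local construction of Equations~(\ref{e:orderlift})--(\ref{e:ulift}). This yields a holomorphic extension of $\widetilde u$ across the finite set $u^{-1}(x_0)$, and exactly as in Proposition~\ref{p:lift} the resulting $\widetilde J$-holomorphic map $\widetilde u\colon(D,\partial D)\to(\widetilde M,\widetilde L_0\cup\widetilde L_1)$ is unique and satisfies $\pi\circ\widetilde u=u$.

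It then remains to check that $\widetilde u$ genuinely lies in $\mathcal{M}(\widetilde p,\widetilde q,\widetilde J)$. Because $u(\partial D)$ avoids $x_0$, we get $\widetilde u(\partial D)=\pi^{-1}(u(\partial D))\subset\pi^{-1}(L_0\cup L_1)=\widetilde L_0\cup\widetilde L_1$, with the two boundary arcs mapping to $\widetilde L_0$ and $\widetilde L_1$ respectively. Since $p,q\notin\iota(B^{2n}(\rho))$, the points $\widetilde p:=\pi^{-1}(p)$ and $\widetilde q:=\pi^{-1}(q)$ are the unique lifts lying in $\widetilde L_0\cap\widetilde L_1$, and $\widetilde u$ converges to them at the two strip ends because $u$ converges to $p,q$ through points bounded away from $x_0$. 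Finally the energy of $\widetilde u$ equals $\int_D\widetilde u^*\widetilde\omega_\rho=\widetilde\omega_\rho([\widetilde u])<\infty$ (using that $\widetilde J$ is $\widetilde\omega_\rho$-compatible by Proposition~\ref{p:propblow}), so $\widetilde u$ is a finite-energy $\widetilde J$-holomorphic strip with the correct asymptotics and boundary conditions, hence a point of $\mathcal{M}(\widetilde p,\widetilde q,\widetilde J)$, which is moreover cut out transversally by Proposition~\ref{p:regular}. Since the whole lifting procedure is $\mathbb{R}$-equivariant --- the lift of $r.u$ is $r.\widetilde u$, by uniqueness --- it descends to the quotients by the translation action, and the identity $\mathcal{M}\pi(\widetilde u)=\pi\circ\widetilde u=u$ gives surjectivity.

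The only delicate point is the behaviour of the lift at the interior points of $u^{-1}(x_0)$: one must be sure that the locally defined holomorphic extension is compatible with $\widetilde J$ and glues smoothly onto $\pi^{-1}\circ u$. This is exactly what the assumption $\iota^*J=J_0$ secures, since it makes a neighbourhood of $E$ biholomorphic to an open set of $\widetilde{\mathbb{C}^n}$ where the lift is the classical one from Equations~(\ref{e:orderlift})--(\ref{e:ulift}); away from these finitely many points there is nothing to prove because $\pi$ is a biholomorphism. The Maslov-index computation of Proposition~\ref{p:maslovindx} is not needed for surjectivity itself, but it records the price paid: in general $[\widetilde u]$ differs from a lift of $[u]$ by a multiple of the exceptional class, which is precisely why $\mathcal{M}\pi$ cannot be expected to respect homotopy classes and why the statement is phrased for the full moduli spaces.
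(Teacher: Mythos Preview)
Your argument is correct and follows the same approach as the paper, which simply remarks (immediately before the proposition) that surjectivity follows from Propositions~\ref{p:lift} and~\ref{p:regular}; you have spelled out in detail the adaptation of the disk-lifting construction to strips, together with the verification of the boundary conditions, asymptotics, and $\mathbb{R}$-equivariance that the paper leaves implicit.
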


Next we give the proof of some of the results that where stated at the Introduction which are
related to Lagrangian Floer homology.

\noindent{\bf Theorem \ref{t:invariance}.}{\em \, 
Let $L_0,L_1\subset (M,\omega)$  be admissible  Lagrangian submanifolds.
Assume that there exists 
$J\in \mathcal{J}_\textup{reg}(L_0,L_1)$ 
such that  $\iota^*J=J_0$ where $\iota:(B^{2n}(\rho),\omega_0)\to (M,\omega)$ is 
a symplectic embedding that avoids the
Lagrangian submanifolds and $\rho$ is given by Eq. (\ref{e:rho}).
If
 $\pi_j: (\widetilde M_j,\widetilde\omega_j)\to (M,\omega)$
are  the  monotone one-point blow up of
$(M,\omega)$ at $x_j\in M\setminus (L_0\cup L_1)$ for $j=1,2$,
then
\begin{eqnarray*}
\textup{HF}_*(\pi_1^{-1}(L_0),\pi_1^{-1}(L_1))\simeq
\textup{HF}_*(\pi_2^{-1}(L_0),\pi_2^{-1}(L_1))
\end{eqnarray*}
as $\Lambda$-modules.
}
\begin{proof} 
The one-point blow up is defined on symplectic manifolds of dimension greater than two.
Therefore there exists a path connecting $x_1$ to $x_2$
that does not intersect the
Lagrangian submanifolds.
 Hence there is a Hamiltonian diffeomorphism $\psi$
on $(M,\omega)$ such that $\psi(x_1)=x_2$ and its support does
not intersects $L_0\cup L_1$.

Since the image of $\iota:(B^{2n}(\rho),\omega_0)\to (M,\omega)$  misses the Lagrangian submanifolds,
then $\psi\circ \iota_1$ is also a symplectic embedding of the ball 
such that  $\psi\circ\iota(0)=x_2$ and its image also misses the Lagrangians.
For $j=1,2$, let $(\widetilde M_j,\widetilde\omega_{j})$ be the corresponding
symplectic one-point
blow up at $x_j$. 
Note that $(\psi^{-1})^*(J)$ is  regular and $(\psi\circ\iota)^*\circ(\psi^{-1})^*(J)=J_0$.
Then by Proposition \ref{p:regular},
 $J$ and $(\psi^{-1})^*(J)$ induced regular almost complex structure on the
their respective one-point blow up.  Moreover, $J$-holomorphic strips
that go thru $x_1$ are mapped to  $(\psi^{-1})^*(J)$-holomorphic strips
that go thru $x_2$ preserving the Maslov-Viterbo index and vice versa.
Hence $\textup{HF}_*(\pi_1^{-1}(L_0),\pi_1^{-1}(L_1))$
in $(\widetilde M_1,\widetilde\omega_{1})$ is isomorphic
as $\Lambda$-modules to $\textup{HF}_*(\pi_2^{-1}(L_0),\pi_2^{-1}(L_1))$
in $(\widetilde M_2,\widetilde\omega_{2})$.
\end{proof}

The Floer differential $\partial:  \textup{CF}(L_0,L_1)\to  \textup{CF}(L_0,L_1)$
only involves holomorphic strips of Maslov-Viterbo index $1$.
From Proposition \ref{p:maslovindx}, we know that
a holomorphic strip of Maslov-Viterbo index 1 in $\m$
lifts also to a holomorphic strip of 
index 1 in $(\widetilde M,\widetilde\omega_\rho)$.
But also, from  Proposition \ref{p:maslovindx},
a holomorphic strip in $(M,\omega)$ of Maslov-Viterbo index $2n-1$ 
lift also to a holomorphic strip of Maslov-Viterbo index $1$.
Further if $J$ is regular for $(L_0,L_1)$, from
Proposition \ref{p:regular} we know that $\widetilde J$ is also regular.
Therefore if on $M\setminus (L_0\cup L_1)$ there exists a point that is not
contained in a $J$-holomorphic strip of Maslov-Viterbo index $2n-1$, then
$$
\textup{HF}_*(\widetilde L_0,\widetilde L_1)\simeq
\textup{HF}_*(L_0,L_1).
$$
This arguments proves the following proposition 
of the Introduction.

\noindent{\bf Theorem \ref{t:noholo}.}
{\em 
Let $L_0,L_1\subset (M,\omega)$  be admissible Lagrangians.
If there exists $x_0\in M\setminus (L_0\cup L_1)$ and
 $J\in\mathcal{J}_{\textup{reg}}(L_0,L_1)$
such that $\iota^*J=J_0$; where $\iota$ is the symplectic embedding of the ball,
 and $x_0$ does not lie
in any $J$-holomorphic strip of Maslov-Viterbo index $2n-1$, then
\begin{eqnarray*}
\textup{HF}_*(\widetilde L_0,\widetilde L_1)\simeq
\textup{HF}_*(L_0,L_1).
\end{eqnarray*}
}

\section{Proof of Theorem \ref{t:dif} and Computation of $\textup{QH}_*(
\widetilde{\mathbb{T}}_\textup{Cliff})$}
\label{s:clif}

First we  give a proof of Theorem \ref{t:dif}.

\noindent{\bf Theorem \ref{t:dif}.}
{\em
Let $L$ be an admissible Lagrangian in $(M,\omega)$ that is
$(1,2)$-uniruled of order $2$, $x_0\in M\setminus L$ a generic point and suppose that $\textup{dim}(M)=4$. 
If $p$ and $q$ in $L$ are critical points of a generic Morse-Smale function $f$
with respect to a generic Riemannian metric $g$ on $L$ such that $\textup{ind}(p)-\textup{ind}(q)-1=-2$, then
$$
\langle \widetilde d(p), q\rangle=  \langle  d(p), q\rangle +_{\mathbb{Z}_2} k
$$
where $k$ is the number, mod $2$, of classes $A\in \textup{H}_2(M,L;\mathbb{Z})$ such that for some 
$J\in \mathcal{J}_\textup{reg}(M,L)$,
 such that $\iota^*J=J_0$ where $\iota$ is the symplectic embedding of the ball,
the moduli space of pearly trajectories
$ \mathcal{P}(p,q,A;g,f,J)$ is non empty,
 $\mu_L(A)=4$ and there is a $J$-holomorphic disk $u$ such that
 $x_0\in u(D) $ and $A=[u]$.
}
\begin{proof}
Since the Lagrangian is admissible, there is an embedding $\iota: (B^4(\rho),\omega_0)\to (M\setminus L,\omega)$
such that $\iota(0)=x_0$ and $\iota^*J=J_0$ for some $J\in\mathcal{J}_\textup{reg}(M, L)$.
Further by Proposition \ref{p:regular}, the induced almost complex structure $\widetilde J$
in $(\widetilde M, \widetilde \omega_\rho)$ is regular. Thus $\widetilde J$-holomorphic
disk with boundary in $\widetilde L$ map under the blow up map to 
 $J$-holomorphic
disk with boundary in $ L$, and vice versa.

In order to compute $\langle \widetilde d(p), q\rangle$ where
$p$ and $q$ in $L=\widetilde L\subset (\widetilde M, \widetilde \omega_\rho)$
are given as in the statement, we must count all pearly trajectories
in $(\widetilde M, \widetilde \omega_\rho)$ that go from $p$ to $q$ that
contain a single $\widetilde J$-holomorphic disk of Maslov index $2$.
By Proposition \ref{p:maslovindx}, a  pearly trajectory
in $(M, \omega)$ that goes from $p$ to $q$ that
contains a single $J$-holomorphic disk of Maslov index $2$ lifts
to a desire pearly trajectory
in $(\widetilde M, \widetilde \omega_\rho)$. Note that since the 
Maslov index is two, generically the image of the $(D,\partial D)$
under the whole  collection of $J$-maps in such a class is 3-dimensional in $M$. Thus, generically it
avoids the generic point $x_0$. 
Moreover if the pearly
trajectory contains a $J$-holomorphic disk in the class $A\in \textup{H}_2(M,L)$,
the lift contains a $\widetilde J$-holomorphic disk in the class $\widetilde A\in \textup{H}_2(\widetilde M,
\widetilde L)$.

Again by Proposition \ref{p:maslovindx}, it follows that a pearly trajectory
in $(M, \omega)$ that goes from $p$ to $q$ that
contains a single $J$-holomorphic disk of Maslov index $4$, say in the class $A$,
and moreover there is a $J$-holomorphic disk such that $x_0\in u(D)$
and $A=[u]$, lifts 
to a desire pearly trajectory
in $(\widetilde M, \widetilde \omega_\rho)$. 
That is, to a  pearly trajectory
in $(\widetilde M, \widetilde \omega_\rho)$ from $p$ to $q$ that
contains a single $\widetilde J$-holomorphic disk of Maslov index $
2$ in the class $\widetilde A-L_E.$ 

Note that this are the only possibilities of pearly trajectories in $(\widetilde M, \widetilde \omega_\rho)$.
Hence the theorem follows. 
\end{proof}


As an application of our results we compute the pearly differential of
$\widetilde{\mathbb{T}}_\textup{Cliff}\subset  (\widetilde{\mathbb{C}P}\,^2,\widetilde\omega_\rho)$.
Recall from the work of Fukaya,  Oh, Ohta and  Ono  
\cite{FO3-toricI}; and  Entov and Polterovich in
 \cite{entov-pol-rigid}, that $\widetilde{\mathbb{T}}_\textup{Cliff}$
 is a wide Lagrangian. We present an alternative approach of this fact.

\noindent{\bf Theorem \ref{t:cliff}.}
{\em
After blowing up one point the proper transform of the Clifford torus $\mathbb{T}_\textup{Cliff}\subset
(\mathbb{C}P^2,\omega_\textup{FS})$
in $(\widetilde{\mathbb{C}P}\,^2,\widetilde\omega_\rho)$ is also a wide Lagrangian.
}
\begin{proof}
Assume that $\mathbb{T}_\textup{Cliff}$ is equipped with a generic Riemannian
structure and a Morse-Smale function $f$ with four critical points $p_0,p_1^a,p_1^b$ and $p_2$.
Here the index notation in the critical point indicates the Morse index of the critical point. 
It is know from  \cite{biran-cornea-rigidity}  that the standard complex structure $J$ on 
$(\mathbb{C}P^2,\omega_\textup{FS})$
is regular and
that the pearl differential $d$ is identically zero.

Further in order to compute the pearl differential $\widetilde d$ on the one-point blow up,
we need to know the pearly trajectories that contribute to $d$. 
To that end, consider the generators $A_0, A_1,A_2\in \textup{H}_2(\mathbb{C}P^2,
\mathbb{T}_\textup{Cliff};\mathbb{Z})$  such that $A_1$ and $A_2$ when restricted to 
$\mathbb{T}_\textup{Cliff}$ go to $c_1$ and $c_2$ respectively and  generate
$\textup{H}_1(
\mathbb{T}_\textup{Cliff};\mathbb{Z})$.
Thus, $A_0+A_1+A_2$ represents the absolute class of the line in
$\textup{H}_2(\mathbb{C}P^2,
\mathbb{T}_\textup{Cliff};\mathbb{Z})$
and 
$\mu_{\mathbb{T}_\textup{Cliff}}(A_0)=\mu_{\mathbb{T}_\textup{Cliff}}(A_1)=
\mu_{\mathbb{T}_\textup{Cliff}}(A_2)=2$.

According to the dimension of the moduli space of pearly trajectories there are exactly four
instances  where the pearly trajectories are formed by non constant $J$-holomorphic disks.  
Namely
$\langle   d(p_0),p_1^a  \rangle$, $\langle   d(p_0),p_1^b  \rangle$, 
$\langle   d(p_1^a), p_2  \rangle$ and $\langle   d(p_1^b), p_2  \rangle$.
Moreover, in each case there are exactly two 
the pearly trajectories each  containing a single $J$-holomorphic disk of Maslov index 2. 
For example, there are two pearly trajectories from $p_0$ to $p_1^a$, $\langle   d(p_0),p_1^a  \rangle=0$.
Both trajectories start with a $J$-holomorphic disk thru $p_0$  follow by a flow line that goes to
$p_1^a$.  In one case the $J$-holomorphic disk lies in the class $A_0$
and in the other case we can assumed to be $A_1$. That is we assume that the class
$c_1\in  \textup{H}_1( \mathbb{T}_\textup{Cliff};\mathbb{Z})$
intersects the unstable submanifold at $p_1^a.$
 For the other three cases
the pearly trajectories that appear are similar to the case the was described.

Additionally let $x_0=[1:a:b]$  be a generic point in $\mathbb{C}P^2\setminus\mathbb{T}_\textup{Cliff}$ 
such that $0<|a|<|b|<1.$ Hence $\widetilde{\mathbb{C}P}\,^2$ will stand for the blow up
of $\mathbb{C}P^2$ at $x_0$.  Moreover 
by \cite[Cor. 1.2.5]{biran-cornea-rigidity}
there is a symplectic embedding of $(B^4(1/\sqrt{3}),\omega_0)$
into 
$(\widetilde{\mathbb{C}P}\,^2\setminus \widetilde{\mathbb{T}}_\textup{Cliff},\omega_{\textup{FS}})$. 
Hence  by Lemmas \ref{l:monotonelag} and \ref{l:maslovindexrelation},
$\widetilde{\mathbb{T}}_\textup{Cliff}\subset  (\widetilde{\mathbb{C}P}\,^2,\widetilde\omega_\rho)$
is a monotone Lagrangian submanifold with minimal Maslov number equal to 2.
Henceforth, the Lagrangian quantum homology of 
$\widetilde{\mathbb{T}}_\textup{Cliff}\subset  (\widetilde{\mathbb{C}P}\,^2,\widetilde\omega_\rho)$
is well defined.

Recall that $(\mathbb{C}P^2,\omega_\textup{FS})$ is $(1,1)$-uniruled of order 4. Hence we cannot conclude
directly that $\textup{QH}_*({\mathbb{T}}_\textup{Cliff})$ and $\textup{QH}_*(
\widetilde{\mathbb{T}}_\textup{Cliff})$ are isomorphic.
In order to compute $\widetilde d$ and $\textup{QH}_*(\widetilde{\mathbb{T}}_\textup{Cliff})$ we use the same
Riemannian structure and Morse-Smale function on the Lagrangian submanifold.
Since $J$ is a complex structure on $\mathbb{C}P^2$, then it induces a complex structure $\widetilde J$
on 
$\widetilde{\mathbb{C}P}\,^2$. Further, by 
Proposition \ref{p:regular}, 
$\widetilde J$ is regular for $(\widetilde{\mathbb{C}P}\,^2,
 \widetilde{\mathbb{T}}_\textup{Cliff})$.

Now we compute the pearly trajectories in $\widetilde{\mathbb{C}P}\,^2$ that connect 
$p_0$ to $p_1^a$ and show that $\langle   \widetilde d(p_0),p_1^a  \rangle=0$. 
More precisely, the value of $k$ of Theorem \ref{t:dif} is two.
The two pearly
trajectories described above still exist; each having $\widetilde J$-holomorphic disks in the
classes $\widetilde B$ and $\widetilde A_1$. Note that $\mu_{\widetilde{\mathbb{T}}_\textup{Cliff}}
(\widetilde B)=\mu_{\widetilde{\mathbb{T}}_\textup{Cliff}}
(\widetilde A_1)=2$. 
However there are two new pearly trajectories that are induced by $J$-holomorphic
disks in $(\mathbb{C}P^2,
\mathbb{T}_\textup{Cliff})$ with Maslov index 4. For this, we rely on the classification of 
$J$-holomorphic disks on $(\mathbb{C}P^2,
\mathbb{T}_\textup{Cliff})$ given by Cho in \cite[Theorem 10.1]{cho-floer}.
There are six classes of $J$-holomorphic disk with Maslov index 4,
$A_j+A_k$ for $j,k\in \{0,1,2\}$. Of those
only the classes $A_1+A_2$ and $A_0+A_2$ contain $J$-holomorphic
disks that go thru $x_0$ and when restricted to $\mathbb{T}_\textup{Cliff}$
intersect the unstable submanifold at $p_1^a$.
Hence the new pearly trajectories that contribute to $\langle   \widetilde d(p_0),p_1^a  \rangle$
star with
a $\widetilde J$-holomorphic disk thru $p_0$ in the class
$\widetilde A_1+\widetilde A_2-L_E$ and a flow line to $p_1^a$; 
the other also stars with a $\widetilde J$-holomorphic
disk thru $p_0$ in the class $\widetilde A_0+\widetilde A_2-L_E$ and a flow line to $p_1^a$.
Hence $\langle   \widetilde d(p_0),p_1^a  \rangle=0$.

A similar argument applies to the other cases to conclude that
 $\langle  \widetilde  d(p_0),p_1^b  \rangle= 
\langle  \widetilde  d(p_1^a), p_2  \rangle =\langle   \widetilde d(p_1^b), p_2  \rangle=0$.
Hence $\textup{QH}_*(
\widetilde{\mathbb{T}}_\textup{Cliff})=\textup{H}_*(
\widetilde{\mathbb{T}}_\textup{Cliff};\mathbb{Z}_2)\otimes \Lambda$ 
and $\widetilde{\mathbb{T}}_\textup{Cliff}$ is a
wide Lagrangian in $(\widetilde{\mathbb{C}P}\,^2,\widetilde\omega_\rho)$.
\end{proof}

\bibliographystyle{acm}
\bibliography{/Users/andres/Dropbox/Documentostex/Ref.bib} 
\end{document}